\documentclass[10pt,reqno]{amsart}
\usepackage[T2A]{fontenc}
\usepackage[utf8]{inputenc}
\usepackage[russian, english]{babel}
\usepackage{amsmath}
\usepackage{amssymb}
\usepackage{amsfonts}
\usepackage{bm} 
\usepackage{stmaryrd} 
\usepackage{multirow}\usepackage{lscape}
\usepackage{calrsfs} 
\usepackage{mathrsfs}
\usepackage{cite}
\allowdisplaybreaks

\newtheorem{thm}{Theorem}
\newtheorem{prop}[thm]{Proposition}
\newtheorem{lem}[thm]{Lemma}
\newtheorem{cor}[thm]{Corollary}

\newtheorem{conj}{Conjecture}

\begin{document}

\title[On generations by conjugate elements]{On  generations by conjugate elements\\ in almost simple groups with socle $\mbox{}^2F_4(q^2)'$}
\author{Danila O. Revin}%
\address{Danila O. Revin
\newline\indent Sobolev Institute of Mathematics,
\newline\indent 4, Koptyug av.
\newline\indent 630090, Novosibirsk, Russia
} \email{revin@math.nsc.ru}

\author{Andrei V. Zavarnitsine}%
\address{Andrei V. Zavarnitsine
\newline\indent Sobolev Institute of Mathematics,
\newline\indent 4, Koptyug av.
\newline\indent 630090, Novosibirsk, Russia
} \email{zav@math.nsc.ru}

\maketitle

{\small
\begin{quote}
\noindent{{\sc Abstract.} We prove that if $L=\mbox{}^2F_4(2^{2n+1})'$ and $x$~is a nonidentity automorphism of $L$ then $G=\langle L,x\rangle$ has four elements
conjugate to $x$ that generate $G$. This result is used to study
the following conjecture about the $\pi$-radical of a finite group:
Let $\pi$~be a proper subset of the set of all primes and let
$r$~be the least prime not belonging to $\pi$. Set $m=r$ if $r=2$
or $3$ and set $m=r-1$ if $r\geqslant 5$. 
Supposedly, an element
$x$ of a finite group $G$ is contained in the $\pi$-radical
$\operatorname{O}_\pi(G)$ if and only if every $m$ conjugates of
$x$ generate a $\pi$-subgroup. 
Based on the results of this paper and a few previous ones, the conjecture is confirmed for all finite groups whose every nonabelian composition factor is isomorphic to a sporadic, alternating, linear, or unitary simple group, or to one of the groups of type ${}^2B_2(2^{2n+1})$, ${}^2G_2(3^{2n+1})$,
${}^2F_4(2^{2n+1})'$, $G_2(q)$, or ${}^3D_4(q)$.}

\medskip
\noindent{{\sc Keywords:} 
 exceptional groups of Lie type, Ree groups, Tits group, conjugacy, generators, $\pi$-radical, Baer--Suzuki theorem}

\end{quote}
}
\section{Introduction}

In~\cite{03GuSaxl}, R.\,Guralnick and  J.\,Saxl introduced the following concept. Let $L$~be a finite simple nonabelian group and let $x\in\operatorname{Aut}(L)$~be a nonidentity (possibly, inner) automorphism. The least number $m$ such that some $m$ conjugates of $x$ in  $G=\langle\operatorname{Inn}(L),x\rangle$ generate~$G$ is denoted by   $\alpha(x,L)$. In the same paper, upper bounds on $\alpha(x,L)$ were found for every finite simple group $L$ and every automorphism $x$ of $L$ of prime order (clearly,  $\alpha(x,L)\leqslant \alpha(y,L)$ if  $y\in\langle x \rangle$~has prime order). These bounds have numerous applications in the theory of finite groups, for example, in proving generalizations and analogs of the famous Baer--Suzuki theorem~\cite{FGG,GGKP,GGKP1,GGKP2,10Guest,12Guest,13GKP,14Guest,15GuTiep,GuMa,Re,11Re,11Re1,21YangReVd,22YangWuRe,23YangWuReVd}. 

The bounds obtained in~\cite{03GuSaxl} are best possible not for all $L$. For example, more precise bounds on $\alpha(x,L)$ for sporadic groups were found in ~\cite[Theorem~3.1]{14DiMarPellZal}. 
The authors of~\cite{03GuSaxl} believe that the bounds for exceptional groups of Lie type  might not be precise either. 
It was proven~\cite[Theorem~5.1]{03GuSaxl} that if $L$~is an exceptional group of Lie type of untwisted Lie rank $\ell$ and $x$~is its automorphism of prime order then $\alpha(x,L)\leqslant \ell+3$, except in the case $L=F_4(q)$ and~$x$ being an involution, where it was proven that $\alpha(x,L)\leqslant 8$. At the same time, the authors consider it plausible   ~\cite[Remark at the end of Section~5]{03GuSaxl} that $\alpha(x,L)\leqslant 5$ for all exceptional groups of Lie type.

In this paper, we prove the following theorem:
\begin{thm}\label{thm:2F4}
Let  $L=\mbox{}^2F_4(q^2)'$, where $q^2=2^{2n+1}$. Then $\alpha(x,L)\leqslant 4$ for every  $x\in\operatorname{Aut}(L)$ of prime order.
\end{thm}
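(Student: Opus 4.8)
The plan is to run through the $\operatorname{Aut}(L)$-classes of elements of prime order and, for each, either exhibit $3$ or $4$ generating conjugates explicitly or prove, by a counting estimate over maximal overgroups, that $4$ random conjugates suffice. I would begin by assembling the structural input: the conjugacy classes of prime-order elements of $\operatorname{Aut}(L)$ together with their centralizer orders — from Shinoda's determination of the conjugacy classes of ${}^2F_4(q^2)$ for $q^2>2$ and from the Atlas for the Tits group $L={}^2F_4(2)'$ — and the maximal subgroups of every almost simple $G$ with socle $L$, from Malle's classification of the maximal subgroups of ${}^2F_4(q^2)$ together with the Atlas. Since $\operatorname{Out}(L)$ is cyclic, of order $2$ for the Tits group and of odd order $2n+1$ otherwise, a prime-order $x\in\operatorname{Aut}(L)$ is of exactly one of the following kinds: inner, in which case $G=L$; a field automorphism $\phi$ of prime order $p\mid 2n+1$, for which $C_L(\phi)$ is a subfield subgroup of type ${}^2F_4$; or the outer involution of ${}^2F_4(2)'$.

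The basic tool is the standard estimate: setting $\operatorname{fpr}(x,G/M)=|x^G\cap M|/|x^G|$, the probability that $\langle x,x^{g_2},x^{g_3},x^{g_4}\rangle\neq G$ for independent uniform $g_2,g_3,g_4\in G$ is at most
\[\sum_{[M]}\,|G:M|\cdot\operatorname{fpr}(x,G/M)^4,\]
the sum running over the $G$-classes of maximal subgroups $M$ (only those not containing $L$ contribute when $x$ is outer). Bounding $|x^G|$ from below by Shinoda's data and $|x^G\cap M|$ crudely by $|M|$ — refined, for $M$ parabolic, by the usual bound on the part of a unipotent or semisimple class meeting a Levi, and, for $M$ a torus normalizer or a small reductive or local subgroup, by a direct count — one checks that this sum is $<1$ for every semisimple $x$ of odd prime order, for every field automorphism, and for the outer involution of the Tits group, once $q^2$ is large; the finitely many remaining cases (in practice all of $q^2=2$, treated from the Atlas or a concrete representation, and $q^2=8$) are disposed of by direct computation, e.g.\ via class multiplication coefficients or an explicit permutation model.

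The genuine obstacle is the set of (unipotent) involution classes of $L$: there $\operatorname{fpr}(x,G/P)$ is too large (it is of order $q^{-2}$ against each maximal parabolic $P$, whose index is $\approx q^{20}$ or $\approx q^{22}$), so the estimate above breaks down. To handle an involution $x$ I would not choose all four conjugates at random, but first pick $x_1=x$ and $x_2=x^{g}$ so that the dihedral group $\langle x_1,x_2\rangle$ has cyclic part of order divisible by a primitive prime divisor $r$ of $(q^2)^{12}-1$ — i.e.\ $r\mid\Phi_{12}(q^2)=q^8-q^4+1$, such $r$ existing for every $q^2=2^{2n+1}$; by the classification of the subgroups of ${}^2F_4(q^2)$ of order divisible by such an $r$, the only maximal overgroup of $\langle x_1,x_2\rangle$ in $L$ is then the normalizer of the corresponding cyclic maximal torus, and it is uniquely determined by $\langle x_1,x_2\rangle$, so that $\langle x_1,x_2,x_3\rangle=L$ for any conjugate $x_3$ of $x$ outside that torus normalizer; this gives $\alpha(x,L)\le 3$ for every involution class in which two conjugates of $x$ can generate such a dihedral group.

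I expect the crux to be the smallest, long-root-type, involution class, for which this last argument fails: two long-root subgroups lie in a rank-$\le 2$ reductive subgroup, so two such involutions never generate a subgroup of order divisible by a $\Phi_{12}(q^2)$-primitive prime divisor. There I would fall back on a weaker primitive prime divisor, one dividing $\Phi_6(q^2)=q^4-q^2+1$, which two long-root involutions can reach inside an $\operatorname{SU}_3(q^2)$-type subgroup; this reduces the possible overgroups of $\langle x_1,x_2\rangle$ to a short list (a torus normalizer, an $\operatorname{SU}_3(q^2)$- or $\operatorname{PGU}_3(q^2)$-type subgroup, and, for the smallest primes $p$, possibly the subfield subgroup), which one then kills with one further well-chosen conjugate $x_3$, so that $\alpha(x,L)\le 4$ overall. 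The delicate verification, which I would carry out using Shinoda's fusion information supplemented by computation inside the relevant subgroups (and by the explicit treatment of $q^2\in\{2,8\}$), is that suitable pairs $x_1,x_2$ actually exist for each involution class — that is, controlling into which $L$-classes the involutions of these overgroups fuse and which products of two $\mathcal C$-involutions occur.
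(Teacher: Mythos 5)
Your proposal follows the Guralnick--Saxl methodology (fixed-point-ratio estimates for most classes, primitive-prime-divisor arguments for involutions), but the two steps that carry all the weight are precisely the ones you leave unverified, and at least one of them is genuinely doubtful as stated. First, the claim that $\sum_{[M]}|G:M|\operatorname{fpr}(x,G/M)^4<1$ for all odd semisimple $x$ is not checked, and the crude bound $|x^G\cap M|\leqslant|M|$ that you propose as the default does fail: for $x$ of order $3$ (centralizer $SU_3(q^2)$, so $|x^G|\approx l^{18}$ with $l=q^2$) against $P_b$ (order $\approx l^{16}$, index $\approx l^{10}$) the contribution is $\approx l^{10}\cdot l^{-8}=l^{2}>1$, and the term is killed only by the arithmetic accident that $3\nmid|P_b|$; so ``one checks'' conceals real class-by-class fusion analysis, which is exactly the content that separates the bound $4$ from the bound $7$ of Guralnick--Saxl. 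Second, for involutions the existence of a conjugate pair whose product has order divisible by a primitive prime divisor of $\Phi_{12}(q^2)$ (resp.\ $\Phi_6(q^2)$ for the $2$-central class) is the crux, and you explicitly defer it. Note that the involutions inverting $T_{10}$ form a single $L$-class (they are all $T_{10}$-conjugate inside $T_{10}{:}12$), so for each of $T_{10},T_{11}$ at most one of the two involution classes can be reached this way, and you do not determine which; moreover $3$ divides $q^4-q^2+1$ while $2A$-involutions invert no element of order $3$ (Lemma~\ref{involutions}), so a $2A$-involution cannot invert a generator of $T_9$ and your fallback must go through the $3'$-part --- another unestablished fusion fact. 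Your overgroup list for a $\Phi_{12}$-ppd is also incomplete: a primitive prime divisor of $(q^2)^{12}-1$ may divide $q_0^8-q_0^4+1$ for $q^2=q_0^{2p}$, so the subfield subgroups ${}^2F_4(q_0^2)$ must be included (as in Lemma~\ref{toriT10T11}, where they are handled by induction).

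The paper's proof is much shorter and avoids all of these issues by a different reduction. For $x$ of odd prime order, Guest's solvable Baer--Suzuki theorem (Lemma~\ref{GuestTeorem}) gives a conjugate $x^g$ with $\langle x,x^g\rangle$ nonsolvable; since a nonabelian simple group of order coprime to $3$ is a Suzuki group, $\langle x,x^g\rangle$ contains an element $y$ of order $3$ or $5$. For $x$ an involution, the dihedral-group analysis of Lemma~\ref{involutions} (structure constants in a Tits subgroup, plus the identification of $\operatorname{N}_L(\langle t_4\rangle)$) shows that some $\langle x,x^g\rangle$ contains such a $y$ as well. The theorem then follows from $\alpha(y,L)=2$ for every $y$ of order $3$ or $5$ (Propositions~\ref{alpha3} and~\ref{alpha5}): the order-$3$ case is the known $(2,3)$-generation of $L$, and the order-$5$ case is proved by computing class multiplication coefficients $(t_j^L,t_j^L,t_k^L)>0$ in \textsf{CHEVIE} with $t_k$ a $\Phi_{12}$-ppd element lying only in $\operatorname{N}_L(T_{10})$ or $\operatorname{N}_L(T_{11})$. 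Thus four conjugates of $x$ (namely $x,x^g,x^h,x^{gh}$) generate $L$. If you want to salvage your approach, the honest comparison is that your route would, if completed, give the sharper bound $\alpha(x,L)\leqslant 3$ for involutions, but the verifications you defer are of the same order of difficulty as the paper's entire proof.
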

Theorem~\ref{thm:2F4} confirms and even strengthens the Guralnick--Saxl conjecture in the present case. Recall that  \cite[Theorem~5.1]{03GuSaxl} gives in this case the bound $\alpha(x,L)\leqslant 7$.  

We will also use the obtained result to study the conjecture about the sharp Baer--Suzuki theorem for the $\pi$-radical of a finite group. 

Henceforth, $\pi$ will denote a fixed subset of the set $\mathbb{P}$ of all primes such that  $\pi'=\mathbb{P}\setminus\pi\ne\varnothing$. Recall that a \emph{$\pi$-group}~is a finite group whose order has all prime divisors belonging to ~$\pi$. We set  
\begin{equation}\label{r,m}
   r=r(\pi)=\min\pi'\quad \text{and}\quad m=m(\pi)=\left\{\begin{array}{rl}
                               r, & \text{ if } r\in\{2,3\}, \\
                               r-1, &   \text{ if } r\geqslant 5.
                             \end{array}\right. 
\end{equation}
\begin{conj}\label{ConjBS} {\rm  \cite[Conjecture~1]{21YangReVd}}
    In every finite group $G$, the $\pi$-radical $\operatorname{O}_\pi(G)$, i.\,e. the largest normal $\pi$-subgroup, coincides with the set 
$$
\{x\in G\mid \langle x^{g_1},\dots,x^{g_m}\rangle\ \text{is a}\  \pi\text{-group for all}\ g_1,\dots,g_m\in G\},
$$
where $\pi$ and $m=m(\pi)$ are as defined above.
\end{conj}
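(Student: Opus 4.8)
The plan is to prove the two inclusions separately between $\operatorname{O}_\pi(G)$ and the set
$S(G)=\{x\in G\mid \langle x^{g_1},\dots,x^{g_m}\rangle\ \text{is a }\pi\text{-group for all }g_1,\dots,g_m\in G\}$.
The inclusion $\operatorname{O}_\pi(G)\subseteq S(G)$ is immediate: if $x\in\operatorname{O}_\pi(G)$, then, as $\operatorname{O}_\pi(G)\trianglelefteq G$, every conjugate $x^{g_i}$ again lies in $\operatorname{O}_\pi(G)$, so $\langle x^{g_1},\dots,x^{g_m}\rangle\leqslant\operatorname{O}_\pi(G)$ is a $\pi$-group. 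All the substance is in the reverse inclusion $S(G)\subseteq\operatorname{O}_\pi(G)$, which I would establish by choosing a counterexample $G$ of least order. Taking $g_1=\dots=g_m=1$ shows that any $x\in S(G)$ generates a $\pi$-group, so $x$ has $\pi$-order; it therefore suffices to treat $x$ of prime order $p\in\pi$, and, after passing to $G/\operatorname{O}_\pi(G)$, to reduce to the case $\operatorname{O}_\pi(G)=1$ and prove $x=1$.

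Next I would analyze a minimal normal subgroup $N$ of $G$. Since $\operatorname{O}_\pi(G)=1$, $N$ is either an elementary abelian $\ell$-group with $\ell\in\pi'$, or a direct product of copies of a nonabelian simple group $L$. The abelian case is handled by the standard coprime argument: because $m\geqslant r\geqslant 2$, if $x$ fails to centralize $N$ one may pick $n\in N$ with $[x,n]\neq1$, and then $\langle x,x^n\rangle$ contains the nontrivial element $[x,n]\in N$, so its order is divisible by $\ell\in\pi'$ and it is not a $\pi$-group, contradicting $x\in S(G)$; hence $x$ centralizes all such $N$. The crux is the nonabelian case, and here I would invoke the reduction, available from the framework of the earlier papers in the cited series, that reduces Conjecture~\ref{ConjBS} for $G$ to a generation statement about the almost simple groups $\widetilde G=\langle\operatorname{Inn}(L),y\rangle$ with $L$ a nonabelian composition factor of $G$ and $y\in\operatorname{Aut}(L)$ of prime order.

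For such an $\widetilde G$ the contrapositive of what is wanted is purely a generation assertion: for every nonidentity $x\in\operatorname{Aut}(L)$ of prime order there exist $m=m(\pi)$ conjugates of $x$ generating a subgroup whose order is divisible by some prime of $\pi'$. Since $L$ is not a $\pi$-group, $|L|$ is divisible by a prime $\geqslant r=\min\pi'$. When $r\geqslant5$, so that $m=r-1\geqslant4$, Theorem~\ref{thm:2F4} supplies this at once for $L={}^2F_4(q^2)'$: four suitable conjugates of $x$ already generate all of $\widetilde G\supseteq L$, which has non-$\pi$ order. For the remaining simple groups in the list from the abstract the analogous bounds come from the earlier papers, and feeding these into the reduction yields the conjecture for every $G$ whose nonabelian composition factors lie in that list.

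The main obstacle is the small-$r$ regime $r\in\{2,3\}$, where $m\in\{2,3\}$ falls below the generic bound $\alpha(x,L)\leqslant4$ of Theorem~\ref{thm:2F4}, so one cannot simply generate $\widetilde G$. Since $x$ has $\pi$-order while $r\in\{2,3\}\subseteq\pi'$, one must instead exhibit as few as two conjugates $x,x^g$ with $\langle x,x^g\rangle$ of order divisible by $r$ — for $r=2$, a subgroup of even order. This demands a refined study of products of two members of a conjugacy class of $L$, together with the maximal-subgroup and element-order data of ${}^2F_4(q^2)'$, rather than the coarse bound $\alpha(x,L)\leqslant4$, and it is where the detailed case analysis of the paper is concentrated.
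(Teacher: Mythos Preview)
The statement you are attempting to prove is \emph{Conjecture}~\ref{ConjBS}, and the paper does not prove it: it is presented as an open problem, and only the restricted version in Corollary~\ref{BaerSuzuki} (for groups whose nonabelian composition factors lie in a specified list) is established. Your sketch, despite being framed as a proof of the full conjecture, in fact only handles that restricted class --- your reduction step explicitly invokes bounds ``from the earlier papers'' and from Theorem~\ref{thm:2F4}, which are available only for the simple groups on that list. So as a proof of the stated conjecture your proposal has a genuine gap: nothing in it covers, e.g., a minimal counterexample whose socle involves a classical group of large rank or $E_8(q)$.

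If instead we read your proposal as an outline of Corollary~\ref{BaerSuzuki}, it is close in spirit to the paper's argument but contains some inaccuracies. First, the case $r=2$ is not open at all: the paper simply cites \cite{11Re}, which disposes of it for every finite group, so your discussion of producing two conjugates with even-order span is unnecessary. Second, your sentence ``$r\in\{2,3\}\subseteq\pi'$'' is wrong when $r=3$, since then $2\in\pi$; and for $r=3$ one has $m=3$, so one must exhibit \emph{three} conjugates generating a subgroup of order divisible by~$3$, not ``as few as two''. This is exactly the content of the inequality $\beta_3(x,L)\leqslant 3$ in Theorem~\ref{thm:2F4beta}, which is the refinement the paper proves beyond the bound $\alpha(x,L)\leqslant 4$ of Theorem~\ref{thm:2F4}; the latter alone does not suffice when $r=3$. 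Finally, the paper's actual reduction (taken from \cite{11Re}) passes directly to an almost simple $G=\langle L,x\rangle$ and applies $\beta_s(x,L)\leqslant m$ for the least $s\in\pi'$ dividing $|L|$, rather than the minimal-normal-subgroup analysis you describe.
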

It is not possible to reduce $m$ in this conjecture \cite[Proposition~1.2]{21YangReVd}. The original Baer--Suzuki theorem states that 
$$ 
\operatorname{O}_p(G)=
\{x\in G\mid \langle x^{g_1},x^{g_2}\rangle\ \text{is a}\  p\text{-group for all}\ g_1,g_2\in G\}
$$ 
in every finite group $G$ for every prime~$p$. Therefore, Conjecture~\ref{ConjBS} holds if $\pi=\{p\}$. It also holds if  $r(\pi)=2$ \cite[Theorem~1]{11Re}. 

Conjecture~\ref{ConjBS} was studied in  \cite{21YangReVd,22YangWuRe,23YangWuReVd,Re} and was confirmed for every finite group $G$ whose every nonabelian composition factor is isomorphic to a sporadic, alternating, linear, unitary group, or one of the exceptional groups ${}^2B_2(2^{2n+1})$, ${}^2G_2(3^{2n+1})$, $G_2(q)$, or ${}^3D_4(q)$. The results presented in this paper allow us to include in this list the series of exceptional groups ${}^2F_4(2^{2n+1})$ and the Tits group ${}^2F_4(2)'$. This is enabled by Theorem~\ref{thm:2F4beta} stated below. 

In order to formulate Theorem~\ref{thm:2F4beta}, we denote by $\beta_s(x,L)$, analogously to $\alpha(x,L)$, for a finite simple group $L$, a prime divisor $s$ of its order, and a nonidentity $x\in\operatorname{Aut}(L)$, the least number $m$ such that some $m$ conjugates of $x$ in  $\langle\operatorname{Inn}(L),x\rangle$ generate a subgroup of order a multiple of~$s$. Clearly,  $\beta_s(x,L)\leqslant\alpha(x,L)$.

\begin{thm}\label{thm:2F4beta}
Let $L=\mbox{}^2F_4(q^2)'$, where  $q^2=2^{2n+1}$, and let $s$~be an odd prime divisor of~$|L|$.
Then $\beta_s(x,L)\leqslant 4$ and $\beta_3(x,L)\leqslant 3$ for every  automorphism $x$ of~$L$ of prime order.
\end{thm}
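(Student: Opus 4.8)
The plan is to derive Theorem~\ref{thm:2F4beta} from Theorem~\ref{thm:2F4} together with a careful bookkeeping of which primes $s$ actually divide the orders of the subgroups generated in the proof of Theorem~\ref{thm:2F4}. Since $\beta_s(x,L)\leqslant\alpha(x,L)\leqslant 4$ is immediate for every odd prime $s$, the only genuinely new content is the bound $\beta_3(x,L)\leqslant 3$. So the real task is: given a nonidentity $x\in\operatorname{Aut}(L)$ of prime order, exhibit three conjugates $x^{g_1},x^{g_2},x^{g_3}$ whose product (or whose generated subgroup) has order divisible by $3$.

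\medskip

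\textbf{Step 1: reduce to the structure of $3$-elements and small subgroups.}
First I would recall the subgroup structure of $L={}^2F_4(q^2)'$ with $q^2=2^{2n+1}$ and the list of maximal subgroups (Malle's classification), isolating those of order divisible by $3$ — in particular the maximal tori of orders $q^4\pm q^2\sqrt{2q^2}^{\,3}\!+\!\cdots$ and the subgroups $\operatorname{SU}_3(q^2)$, $\operatorname{Sp}_4(q^2)$, $\,{}^2B_2(q^2)\wr 2$, etc. Since $3\mid |L|$ always (indeed $3\mid q^2+1$ when $2n+1$ is odd, and in general $3$ divides one of the small cyclotomic factors), there is a $3$-element $t\in L$. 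The idea is that it suffices to find, for each class of $x$, a pair of conjugates $x^g, x^h$ such that $x^g x^h$ or $\langle x^g,x^h\rangle$ already has order divisible by $3$ — giving $\beta_3\leqslant 2$ in most cases — or, failing that, that two conjugates generate a subgroup $H$ with $\mathrm{O}^{3'}(H)\ne 1$ into which a third conjugate can be fed.

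\medskip

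\textbf{Step 2: case analysis on the class of $x$.}
Split according to whether $x$ is (a) a unipotent inner automorphism (an involution or an order-$2$, order-$3$ unipotent element — note $3$-elements of $L$ are semisimple, so unipotent $x$ of prime order is an involution), (b) a semisimple inner automorphism of prime order $p$, (c) a field automorphism (of odd prime order $p\mid 2n+1$), or (d) the graph-field type automorphisms which for ${}^2F_4$ essentially reduce to field automorphisms since the graph automorphism is already built into the twisting. For (a): two conjugate involutions generate a dihedral group $\langle x^g,x^h\rangle=D_{2k}$; choosing $g,h$ so that $k$ is divisible by $3$ (possible because involutions in $L$ have centralizers containing ${}^2B_2(q^2)$ or $\operatorname{Sp}_4$-type subgroups, so suitable products of order divisible by $3$ exist) gives $\beta_3(x,L)\leqslant 2$. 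For (b) with $p=3$: then $\langle x\rangle$ itself is a $3$-group, so $\beta_3(x,L)=1$ trivially. For (b) with $p\ne 3$ and for the field-automorphism cases, one uses that the fixed-point subgroup $C_L(x)$ (for field automorphisms, $C_L(x)\cong{}^2F_4(q_0^2)'$ of the same Lie type over the subfield) still has order divisible by $3$; then one finds two conjugates of $x$ whose commutator or product lands in, or generates together with a third conjugate, a subgroup meeting a $3$-element — typically by placing $x^g, x^h$ inside a common maximal subgroup $M$ of order divisible by $3$ and invoking the known value of $\alpha(\bar x, \bar M)$ or $\beta_3$ for the smaller group $M/\mathrm{O}_{p'}(M)$.

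\medskip

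\textbf{Step 3: assemble and handle the Tits group separately.}
The bound $\beta_s(x,L)\leqslant 4$ for general odd $s$ then follows from Theorem~\ref{thm:2F4} verbatim, and $\beta_3(x,L)\leqslant 3$ follows by taking, in the worst case, the two conjugates from Step~2 that generate a subgroup $H$ with a nontrivial $3$-part in $\mathrm{O}^{3'}(H)\cdot\text{(a factor of order divisible by }3)$ and adjoining one more conjugate to force a $3$-element into the generated subgroup; in the best cases $\beta_3\leqslant 2$ or even $1$. Finally, the Tits group $L={}^2F_4(2)'$ must be treated by direct computation (its maximal subgroups, conjugacy classes, and small generation data are all explicitly known, e.g. from the ATLAS), checking each of its few classes of prime-order automorphisms — this is routine but must be done by hand or by computer. \textbf{The main obstacle} I anticipate is the semisimple case (b) with $p\ne 3$: there one must show that two conjugates of $x$ can be made to generate a subgroup whose order is divisible by $3$, which requires knowing that $x$ does not lie in a proper "$3'$-local-free" subgroup stable under the relevant conjugations — i.e., one needs enough control over the maximal overgroups of $\langle x\rangle$ and of pairs $\langle x^g,x^h\rangle$ to guarantee a $3$-element appears; this is exactly the place where one leans hardest on the explicit maximal-subgroup and torus data, and where the argument for $\beta_3\leqslant 3$ (rather than $\leqslant 2$) may be unavoidable for a few classes.
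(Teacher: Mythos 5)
Your skeleton (reduce everything to $\beta_3\leqslant 3$, split on the type of $x$, treat the Tits group by computer) matches the paper's, but two of your steps contain genuine gaps.

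First, Step 2(a) is wrong for one of the two classes of involutions. You claim that for an involution $x$ one can always choose conjugates $x^g,x^h$ so that the dihedral group $\langle x^g,x^h\rangle$ has order divisible by $3$, giving $\beta_3(x,L)\leqslant 2$. The paper proves (Lemma~\ref{involutions}) that this is \emph{impossible} for the class $2A$ of $2$-central involutions: such an involution neither centralizes nor inverts any element of order $3$ (the non-inversion part requires an argument inside $\operatorname{N}_L(\langle y\rangle)\cong SU_3(q^2){:}2$ for $y$ of order $3$, using \cite[Theorem~4.9.2]{98GorLySol} to show that any inverting involution would centralize an element of order $3$, a contradiction). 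Hence $\beta_3(u,L)>2$ for $u\in 2A$, and the bound $\beta_3(u,L)=3$ has to be obtained differently: the paper embeds $u$ into a subgroup $H=B_2(2){:}2\cong\operatorname{Aut}(A_6)$ of the maximal subgroup $Sp_4(q^2){:}2$ and quotes $\beta_3(u,A_6)=3$ from \cite[Proposition~1.5]{21YangReVd}. Your argument as written would prove a false statement, so this case needs to be redone.

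Second, the semisimple case that you yourself flag as the main obstacle is indeed where the real work lies, and your proposal does not supply the missing ingredient. The paper's resolution is a class-by-class table: elements lying in the tori $T_1,T_2,T_5,T_8$ are handled inside $Sp_4(q^2)$ (via $SO_4^\pm$ and Dye's theorem, reducing to $PSL_2$), those in $T_9$ inside $PGU_3(q^2)$, regular elements of $T_3,T_4$ are shown to have nonprime order, and prime-order elements of $T_{10},T_{11}$ lie in a unique maximal subgroup so that $\alpha=2$ is nearly automatic. But for the classes $t_7,t_9,t_{12},t_{13}$ (which include all elements of order $5$) no such structural reduction is available, and the paper must compute class multiplication coefficients $(t_j^L,t_j^L,t_k^L)$ for $k=16,17$ from the generic character table in \textsf{CHEVIE}, verifying positivity of eight explicit polynomials in $q$. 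Nothing in your Step 2(b) anticipates or replaces this computation; ``placing $x^g,x^h$ inside a common maximal subgroup of order divisible by $3$'' does not by itself produce a $3$-element in $\langle x^g,x^h\rangle$. Your suggestion for field automorphisms (using $\operatorname{C}_L(x)\cong{}^2F_4(q_0^2)$) differs from the paper's route through the parabolic $P_a$ and its Levi quotient $PSL_2(q^2)$, and could plausibly be made to work, but it is not developed.
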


Clearly, the bound $\beta_s(x,L)\leqslant 4$ in Theorem~\ref{thm:2F4beta} readily follows from Theorem~\ref{thm:2F4}. The refinement of this bound for $s=3$ requires a rather subtle additional argument. Theorem~\ref{thm:2F4beta} and the results of~\cite{11Re,21YangReVd,22YangWuRe,23YangWuReVd,Re} imply

\begin{cor}\label{BaerSuzuki}
Let $\pi$~be a nonempty set of primes not containing all primes, and let $r$~be the least prime not belonging to $\pi$. Set
$$m=\left\{\begin{array}{rl}
                               r, & \text{ if } r\in\{2,3\}, \\
                               r-1, &   \text{ if } r\geqslant 5.
                             \end{array}\right.
$$
Then
$$
\mathrm{O}_\pi(G)=\{x\in G\mid \langle x^{g_1}, \dots, x^{g_m}\rangle\text{~is a~} \pi\text{-group for every~} g_1,\dots g_m\in G\}
$$
for every finite group $G$ whose every nonabelian composition factor is isomorphic to a sporadic, alternating, linear, unitary group, or to one of the groups  ${}^2B_2(2^{2n+1})$, ${}^2G_2(3^{2n+1})$, ${}^2F_4(2^{2n+1})'$, $G_2(q)$ or  ${}^3D_4(q)$. 
\end{cor}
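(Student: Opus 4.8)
The plan is to deduce Corollary~\ref{BaerSuzuki} from Theorems~\ref{thm:2F4} and~\ref{thm:2F4beta} by feeding the sharpened bounds on $\beta_s(\cdot,{}^2F_4(q^2)')$ into the reduction machinery already developed for Conjecture~\ref{ConjBS}. Write $R_m(G)$ for the set on the right-hand side of the asserted identity. The inclusion $\operatorname{O}_\pi(G)\subseteq R_m(G)$ is immediate for every $m\geqslant1$: since $\operatorname{O}_\pi(G)\trianglelefteq G$, for $x\in\operatorname{O}_\pi(G)$ all conjugates $x^{g_i}$ lie in $\operatorname{O}_\pi(G)$ and hence generate a $\pi$-subgroup. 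For the reverse inclusion I would quote the reduction theorem of \cite{21YangReVd} (see also \cite{Re}), which, in the form we need, asserts that $R_m(G)\subseteq\operatorname{O}_\pi(G)$ holds in \emph{every} finite group $G$ provided each nonabelian composition factor $L$ of $G$ is \emph{$\pi$-good}, meaning that for every set of primes $\pi$ with not all prime divisors of $|L|$ lying in $\pi$, and for every $x\in\operatorname{Aut}(L)$ of prime order, there is a prime $s$ dividing $|L|$ with $s\in\pi'$ and $\beta_s(x,L)\leqslant m(\pi)$; here one may restrict to $x$ of prime order because $\beta_s(x,L)\leqslant\beta_s(y,L)$ whenever $y\in\langle x\rangle$ has prime order. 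By \cite{11Re,21YangReVd,22YangWuRe,23YangWuReVd,Re}, every sporadic, alternating, linear and unitary simple group, and every group of type ${}^2B_2(2^{2n+1})$, ${}^2G_2(3^{2n+1})$, $G_2(q)$ or ${}^3D_4(q)$, is already known to be $\pi$-good, so the entire task reduces to showing that $L={}^2F_4(q^2)'$ with $q^2=2^{2n+1}$ (the Tits group ${}^2F_4(2)'$ when $n=0$) is $\pi$-good.

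To verify this I would fix such an $L$, a set $\pi$ not containing all prime divisors of $|L|$, and $x\in\operatorname{Aut}(L)$ of prime order, set $r=r(\pi)=\min\pi'$ and $m=m(\pi)$, and split into cases on $r$. If $r=2$ there is nothing to prove here: by \cite[Theorem~1]{11Re} the identity of the corollary holds for \emph{all} finite groups when $r(\pi)=2$, so one may assume $r\geqslant3$. If $r=3$, then $2\in\pi$, $3\in\pi'$ and $m=3$; moreover $3\mid|L|$ (for $n\geqslant1$ one has $q^2=2^{2n+1}\equiv-1\pmod 3$, so $3\mid q^2+1\mid q^6+1\mid|{}^2F_4(q^2)|=|L|$, and $3\mid|{}^2F_4(2)'|$ directly), so $s=3$ is a prime divisor of $|L|$ lying in $\pi'$, and Theorem~\ref{thm:2F4beta} gives $\beta_3(x,L)\leqslant 3=m$, as needed. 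If $r\geqslant5$, then $m=r-1\geqslant4$; since some prime divisor of $|L|$ lies in $\pi'$, pick any such prime $s$, which is automatically odd because $s\geqslant r\geqslant5$, and then $\beta_s(x,L)\leqslant4\leqslant r-1=m$ by Theorem~\ref{thm:2F4beta} (equivalently by $\beta_s(x,L)\leqslant\alpha(x,L)\leqslant4$ from Theorem~\ref{thm:2F4}). In every case the required prime $s$ exists, so $L$ is $\pi$-good, and the corollary follows.

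The only point internal to this argument that I expect to be delicate is the case $r=3$: there $m=m(\pi)=3$, so the bound $\beta_3(x,L)\leqslant4$ that would come straight from Theorem~\ref{thm:2F4} is one unit too weak, and it is essential to use the refined estimate $\beta_3(x,L)\leqslant 3$ of Theorem~\ref{thm:2F4beta} — exactly the statement whose proof the paper flags as requiring ``a rather subtle additional argument.'' Everything else is bookkeeping already provided by the literature: the passage from an arbitrary finite $G$ to its almost simple sections, the handling of direct products $L^k$ on which $x$ may act by permuting the factors, and the $\pi$-goodness of the other families in the list are all supplied by the reduction theorem of \cite{21YangReVd,Re} together with \cite{11Re,22YangWuRe,23YangWuReVd}, so the present paper contributes precisely the missing input, namely the $\pi$-goodness of ${}^2F_4(q^2)'$, via Theorems~\ref{thm:2F4} and~\ref{thm:2F4beta}.
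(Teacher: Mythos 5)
Your proposal is correct and follows essentially the same route as the paper: the paper also reduces, via the machinery of \cite{11Re,21YangReVd} (a minimal counterexample together with \cite[Lemma~7]{11Re}), to an almost simple group $G=\langle L,x\rangle$ with $L$ in the stated list, handles all families other than ${}^2F_4(q^2)'$ by the cited results, and for ${}^2F_4(q^2)'$ uses precisely the bound $\beta_s(x,L)\leqslant m$ supplied by Theorem~\ref{thm:2F4beta}, with the refined estimate $\beta_3(x,L)\leqslant 3$ being exactly what is needed when $r=3$. Your explicit case split on $r$ and your observation that $3$ always divides $|{}^2F_4(2^{2n+1})'|$ are correct and merely make explicit what the paper leaves implicit.
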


\section{Preliminary facts}

Our group-theoretic notation is standard. To specify the structure of a group, we use the notation from~\cite{85Atlas}. The classical groups are denoted as in ~\cite{Bray}. The exceptional groups of Lie type ${}^2B_2(q^2)$, ${}^2G_2(q^2)$, and ${}^2F_4(q^2)$ are denoted following \cite{72Car}, whereas for other groups of Lie type we use the notation from~\cite{98GorLySol}. 

We denote by $n_p$ the $p$-part of $n$, i.\,e. the largest power of a prime $p$ that divides $n$. Given natural numbers $l,n>1$, a prime divisor $p$ of $l^n-1$ is {\em primitive} if it does not divide $l^i-1$ for all $i=1,\ldots,n-1$.
It follows from \cite{Bang,Zs} that a primitive prime divisor of $l^n-1$ exists unless $(n,l)=(6,2)$ or $(2,2^s-1)$ for $s\geqslant 2$. Primitive divisors of $l^n-1$ are also divisors of  $\Phi_n(l)$, where $\Phi_n$ is the $n$th cyclotomic polynomial. In the case $l=q^2=2^{2n+1}$, 
the numbers $\Phi_4(l)=l^2+1$ and $\Phi_{12}(l)=l^4-l^2+1$ admit factorization into integer coprime factors
\begin{equation}  \label{phi4_12_dec}
\begin{split}
l^2+1&= (l-\sqrt{2l}+1)(l+\sqrt{2l}+1), \\
 l^4-l^2+1&=(l^2-\sqrt{2l^3}+l-\sqrt{2l}+1)(l^2+\sqrt{2l^3}+l+\sqrt{2l}+1).
\end{split}
\end{equation}
Hence the sets $\Pi_4(l)$ and $\Pi_{12}(l)$ of primitive prime divisors of $l^4-1$ and $l^{12}-1$ split into the disjoint subsets 
$$
 \Pi_4(l)=\Pi_4^-(l)\cup \Pi_4^+(l)\text{~~and~~} \Pi_{12}(l)=\Pi_{12}^-(l)\cup \Pi_{12}^+(l)
$$ 
so that
$$
\text{the primes in~} \Pi_4^\varepsilon(l) \text{~divide~} l+\varepsilon\sqrt{2l}+1, \text{~and} 
$$
$$
\text{the primes in~} \Pi_{12}^\varepsilon(l) \text{~divide~} l^2+\varepsilon\sqrt{2l^3}+l+\varepsilon\sqrt{2l}+1 
$$ 
for $\varepsilon=\pm1$. According to~\cite[Theorem~2]{Grechkoseeva}, the set $\Pi_n^\varepsilon(l)$ for $n\in\{4,12\}$ is nonempty in all cases, except when  $(n,l,\varepsilon)\in \{(4,2,-),(4,2^3,-),(4,2^5,-),(12,2,-)\}$.

We will also make use of the known facts from ordinary representation theory which can be found in~\cite{06Isa}.
Given three conjugacy classes $K_i$, $i=1,2,3$, of a group $G$, let $(K_1,K_2,K_3)$ denote their {\em class multiplication coefficient}, i.e. the number of pairs $(x_1,x_2)$, $x_i\in K_i$, $i=1,2$,
such that $x_1x_2$ is a fixed element $x_3\in K_3$. This coefficient can be determined from the ordinary character table of the group using the formula
\begin{equation}\label{StructConstFormula}
(K_1,K_2,K_3) = \frac{|K_1||K_2|}{|G|}\sum_{\chi\in
\operatorname{Irr}(G)}\frac{\chi(x_1)\chi(x_2)\overline{\chi(x_3)}}{\chi(1)},
\end{equation}
see \cite[Problem (3.9), p.\,45]{06Isa}.

We conclude this section with three lemmas which give estimates on $\beta_3(x,S)$ for some classical groups $S$ of small rank over a field of characteristic~$2$. We will cite these lemmas in the proof of the theorem~\ref{thm:2F4beta}.

\begin{lem} \label{L_2} {\rm\cite[Lemma~1.9]{23YangWuReVd}} 
Let $S=PSL_2(2^n)$, $n>1$. Then  $\beta_3(x,S)\leqslant 2$ for every inner automorphism  $x$ of $S$ of prime order. 
\end{lem}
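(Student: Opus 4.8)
The plan is to argue by cases on the prime order $p$ of the element $x\in S$ inducing the given inner automorphism; identify $S$ with $PSL_2(q)=PGL_2(q)=SL_2(q)$, $q=2^n$, acting $2$-transitively on the projective line $\Omega=\mathbb P^1(\mathbb F_q)$, $|\Omega|=q+1$, hence transitively on the $2$-subsets of $\Omega$. One uses throughout that $3$ divides $|S|=2^n(2^n-1)(2^n+1)$: since $2^n\equiv(-1)^n\pmod 3$, either $3\mid 2^n-1$ or $3\mid 2^n+1$. The possibilities are $p=2$, $p\mid q-1$, or $p\mid q+1$. If $p=3$, nothing is to be proved, since $\langle x\rangle$ itself has order divisible by $3$. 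If $p=2$, I would use that all involutions of $S$ are conjugate and that $S$ contains dihedral subgroups of orders $2(q-1)$ and $2(q+1)$ (normalizers of a split and of a non-split maximal torus): picking the one, $D$, whose order is divisible by $3$ and writing $D=\langle t_1,t_2\rangle$ with $t_1,t_2$ two of its involutions --- which, being involutions of $S$, are conjugate to $x$ --- we get two conjugates of $x$ generating $D$, a group of order divisible by $3$. Hence $\beta_3(x,S)\leqslant 2$ in these cases.

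The main case is $p\geqslant 5$. The idea is to produce a single conjugate $x^g$ for which $H=\langle x,x^g\rangle$ acts irreducibly on the natural module $V=\mathbb F_q^2$ and is non-cyclic; Dickson's classification of the subgroups of $PSL_2(q)$ then concludes the argument. Since $S$ is simple, $\langle x\rangle$ is not normal, so some conjugate of $x$ lies outside $\langle x\rangle$. If $p\mid q+1$, then $x$ already acts irreducibly on $V$, and I take any $g$ with $x^g\notin\langle x\rangle$. If $p\mid q-1$, then $x$ is diagonalizable over $\mathbb F_q$ and fixes a unique $2$-subset $\{P_1,P_2\}$ of $\Omega$; since $p\geqslant 5$ forces $q\geqslant 8$ and hence $|\Omega|\geqslant 9$, I may choose $g$ with $g^{-1}\{P_1,P_2\}$ disjoint from $\{P_1,P_2\}$, so that $x$ and $x^g$ have no common fixed point on $\Omega$ (in particular $x^g\notin\langle x\rangle$). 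In both situations $H$ acts irreducibly on $V$; it is not cyclic, since $x^g\notin\langle x\rangle$ and so $H$ contains the two distinct order-$p$ subgroups $\langle x\rangle$ and $\langle x^g\rangle$, which a cyclic group cannot; and it is not dihedral, since the elements of odd order of a dihedral group lie in its cyclic subgroup of index $2$, so a dihedral $H$ would be cyclic. By Dickson's classification, an irreducible subgroup of $PSL_2(q)$ is contained in no Borel subgroup, so if it is neither cyclic nor dihedral it is isomorphic to $A_4$, $S_4$, $A_5$, or a subfield subgroup $PSL_2(q_0)=PGL_2(q_0)$ with $q_0\geqslant 4$; in every one of these cases $3$ divides its order. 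Therefore $\beta_3(x,S)\leqslant 2$, as required.

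The only genuine obstacle is the case $p\geqslant 5$: a priori, two conjugates of an element of order $p\geqslant 5$ could be trapped in a subgroup of $3'$-order --- for instance a Frobenius subgroup of a Borel subgroup, or a cyclic subgroup of a maximal torus. What makes the argument go through is the observation that, in characteristic $2$, every irreducible subgroup of $PSL_2(q)$ which is neither cyclic nor dihedral already has order divisible by $3$; so it suffices to force $H=\langle x,x^g\rangle$ to be irreducible and non-cyclic, and the choice of $g$ above accomplishes exactly that. The remaining points --- existence of such a $g$, and the routine subgroup bookkeeping --- present no difficulty.
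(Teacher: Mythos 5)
The paper does not prove this lemma at all: it is imported verbatim as \cite[Lemma~1.9]{23YangWuReVd}, so there is no in-paper argument to compare yours against. Judged on its own, your proof is correct and self-contained. The case split by the prime $p$ is exhaustive; the $p=2$ case via two reflections generating a dihedral subgroup of order $2(q\pm 1)$ with $3\mid q\pm 1$ is fine (all involutions of $SL_2(2^n)$ are indeed conjugate); and in the case $p\geqslant 5$ your choice of $g$ does force $H=\langle x,x^g\rangle$ to be irreducible and non-cyclic, after which the reduction via Dickson's classification is sound --- every irreducible subgroup of $PSL_2(2^n)$ that is neither cyclic nor dihedral is $A_4$, $A_5$, or a subfield subgroup $PSL_2(2^m)$, all of order divisible by $3$. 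Two cosmetic remarks: the $S_4$ alternative you list cannot actually occur in characteristic $2$ (Sylow $2$-subgroups of $PSL_2(2^n)$ are elementary abelian, while $S_4$ has elements of order $4$), though including it is harmless since $3\mid |S_4|$; and in the $p=2$ case you should say explicitly that $t_1,t_2$ are chosen so that $t_1t_2$ generates the cyclic subgroup of order $q\pm1$ (or at least has order divisible by $3$), since not every pair of reflections generates all of $D$. What your argument buys over the paper's citation is a short, elementary, characteristic-$2$-specific proof resting only on Dickson's theorem and the $2$-transitive action on the projective line.
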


\begin{lem} \label{Sp4} Let $S=Sp_4(2^n)$, $n>1$. Then  $\beta_3(x,S)\leqslant 2$ for every inner automorphism  $x\in\operatorname{Aut}(S)$ of odd prime order. 
\end{lem}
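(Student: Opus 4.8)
The plan is to reduce the statement to Lemma~\ref{L_2} by locating $x$, up to conjugacy, inside a subgroup of $S$ built from copies of $PSL_2$ over fields of characteristic $2$. If $x$ has order $3$ then $\langle x\rangle$ itself has order divisible by $3$, so $\beta_3(x,S)\leqslant 1$; hence we may assume $x$ has prime order $p\geqslant 5$. As $p$ is coprime to $q=2^n$, the element $x$ acts semisimply on the natural symplectic module $V=\mathbb{F}_q^4$, and the first step is to classify this action. Semisimplicity gives that $C_V(x)=\ker(x-1)$ is nondegenerate, hence of even dimension, and since $x\neq1$ we get $\dim C_V(x)\in\{0,2\}$. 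Accordingly there are three cases: \textbf{(A)} $\dim C_V(x)=2$; \textbf{(B)} $C_V(x)=0$ and $V$ is $x$-reducible; \textbf{(C)} $C_V(x)=0$ and $V$ is $x$-irreducible. Replacing $x$ by a conjugate, I would place it in a concrete subgroup in each case.

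In case \textbf{(A)}, $x$ fixes a nondegenerate plane $U$ pointwise and acts nontrivially on the nondegenerate complement, so $x\in Sp_2(q)=SL_2(q)\leqslant S$. In case \textbf{(B)}, a short analysis of the $\mathbb{F}_q[x]$-module $V$ shows that either all eigenvalues of $x$ lie in $\mathbb{F}_q$ (so $p\mid q-1$ and $V$ splits into four $1$-dimensional eigenspaces) or all Galois orbits of eigenvalues have size $2$ (so $p\mid q+1$); in both situations $x$ preserves an orthogonal decomposition $V=V_1\perp V_2$ into nondegenerate planes, whence $x=(x_1,x_2)\in Sp_2(q)\times Sp_2(q)=SL_2(q)\times SL_2(q)\leqslant S$, and $C_V(x)=0$ forces each $x_i$ to be nontrivial of order $p$. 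In case \textbf{(C)}, the $\mathbb{F}_q$-eigenvalues of $x$ form a single Galois orbit $\{\lambda,\lambda^q,\lambda^{q^2},\lambda^{q^3}\}$, and inverse-closedness forces $\lambda^{q^2}=\lambda^{-1}$, i.e.\ $p\mid q^2+1$; such an $x$ lies in a Coxeter torus of $S$, which is contained in a field-extension subgroup $Sp_2(q^2)\cong SL_2(q^2)$ of $S$ (equivalently, in $\Omega_4^-(q)\cong PSL_2(q^2)$), so $x\in SL_2(q^2)\leqslant S$.

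Thus in every case $x$ lies in a subgroup $K\leqslant S$ which is either $PSL_2(2^m)$ with $m\in\{n,2n\}$ (cases \textbf{(A)}, \textbf{(C)}) or $PSL_2(2^n)\times PSL_2(2^n)$ (case \textbf{(B)}), where $n>1$, and the image $\bar x$ of $x$ in a suitable $PSL_2$-factor $M$ is nontrivial, hence of prime order $p$. By Lemma~\ref{L_2}, $\beta_3(\bar x,M)\leqslant2$, so there are $h_1,h_2\in M\leqslant S$ with $3$ dividing $\bigl|\langle\bar x^{h_1},\bar x^{h_2}\rangle\bigr|$. Choosing $h_1,h_2$ inside $K$ (centralizing the remaining factor, if any), the group $\langle x^{h_1},x^{h_2}\rangle\leqslant K$ maps onto $\langle\bar x^{h_1},\bar x^{h_2}\rangle$ under the projection $K\to M$, so its order is a multiple of $3$; hence $\beta_3(x,S)\leqslant2$. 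The \emph{main obstacle} is the geometric bookkeeping in cases \textbf{(B)} and \textbf{(C)}: in \textbf{(B)} one must ensure that the orthogonal splitting into nondegenerate planes exists even when $x$ has a repeated eigenvalue, where a priori the obvious $\mathbb{F}_q[x]$-summands may be totally isotropic; this is settled by showing that the sesquilinear form descending the symplectic form is Hermitian, using $\ker\operatorname{Tr}_{\mathbb{F}_{q^2}/\mathbb{F}_q}=\mathbb{F}_q$ in characteristic $2$. In \textbf{(C)}, one must identify the Coxeter-torus class of $S$ with a semisimple class of $Sp_2(q^2)\leqslant S$ by comparing eigenvalues and forms. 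Neither point requires character-table computations.
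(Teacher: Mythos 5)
Your proof is correct, and it reaches the same two target subgroups as the paper — $\Omega_4^+(2^n)\cong PSL_2(2^n)\times PSL_2(2^n)$ and $\Omega_4^-(2^n)\cong PSL_2(2^{2n})$ (note that in characteristic $2$ your $Sp_2(q)\times Sp_2(q)$ and $Sp_2(q^2)$ are exactly these groups) — before invoking Lemma~\ref{L_2}. The difference is in how the containment of $x$ in such a subgroup is justified: the paper simply cites \cite[Table~8.14]{Bray} for the existence of $GO_4^{\pm}(2^n)$ inside $Sp_4(2^n)$ and Dye's theorem \cite[Theorem~6]{Dye}, which says that every element of $Sp_4(2^n)$ is conjugate into one of these subgroups and every odd-order element into the socle $\Omega_4^{\pm}(2^n)$; you instead reconstruct the relevant special case of that statement from first principles by decomposing the natural module under the semisimple action of $x$ (your cases (A)--(C), governed by $p\mid q-1$, $p\mid q+1$, $p\mid q^2+1$). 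Your route is longer and its delicate points — the Hermitian-form argument needed when the $\mathbb{F}_q[x]$-isotypic summands could be totally isotropic, and the identification of the Coxeter torus with a torus of $Sp_2(q^2)$ — are exactly the content that Dye's theorem packages up; but it is self-contained, avoids the external references, and makes transparent why only the fields $\mathbb{F}_{2^n}$ and $\mathbb{F}_{2^{2n}}$ occur. Your separate disposal of elements of order $3$ via $\beta_3(x,S)=1$ is a harmless shortcut the paper does not need, since its uniform argument covers that case too. Both arguments are valid; yours trades brevity for independence from the literature.
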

\begin{proof} It is known  \cite[Table~8.14]{Bray} that $S$ has subgroups $SO_4^+(2^n)=GO_4^+(2^n)$ and $SO_4^-(2^n)=GO_4^-(2^n)$. By  \cite[Theorem~6]{Dye}, every element of $S$ is conjugate to an element of one of these subgroups, while an element of odd order is conjugate to an element of the socle $\Omega_4^+(2^n)\cong PSL_2(2^n)\times PSL_2(2^n)$ or $\Omega_4^-(2^n)\cong PSL_2(2^{2n})$. Consequently,  $x$ induces a nonidentity automorphism $\overline{x}$ on a group $H$ isomorphic to either $PSL_2(2^n)$ or $PSL_2(2^{2n})$. By Lemma~\ref{L_2}, we have $\beta_3(\overline{x},H)\leqslant 2$, whence the claim follows. 
\end{proof}

\begin{lem} \label{U3} Let $S=PSU_3(2^n)$, $n>1$. Then  $\beta_3(x,S)\leqslant 3$ for every automorphism  $x\in\operatorname{Aut}(S)$ of odd prime order. 
\end{lem}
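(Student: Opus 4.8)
The plan is to reduce the problem for $S=PSU_3(2^n)$ to the case of $PSL_2$ handled by Lemma~\ref{L_2}, just as was done for $Sp_4(2^n)$ in the previous lemma, but one extra conjugate will be needed because the natural $PSL_2$-type subgroups of $PSU_3(2^n)$ no longer contain a representative of every conjugacy class of odd-order elements; this accounts for the bound $3$ rather than $2$. First I would recall the subgroup structure of $S=PSU_3(2^n)$ and of $\operatorname{Aut}(S)$: the maximal subgroups are listed in \cite[Table~8.5]{Bray}, and the relevant ones for us are the parabolic (a group of order $q^3$ extended by a cyclic group of order $(q^2-1)/d$ with $d=\gcd(3,q+1)$), the subfield subgroup $PSU_3(2^{n/p})$ when $p\mid n$, the subgroup $PGL_2(q)$ (or $PSL_2(q)$), and the maximal tori, in particular the cyclic torus of order $(q^2-q+1)/d$ normalized by a group inducing an order-$3$ action. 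The outer automorphism group is cyclic, generated by field automorphisms, of order $2n$; an outer automorphism of odd prime order is therefore a field automorphism of odd prime order $p\mid n$, and its centralizer is $PSU_3(2^{n/p})$, which again has odd order elements and is itself a group to which induction or the inner case applies.

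Next I would split into cases according to the type of the odd-order element or automorphism $x$. If $x$ is a field automorphism of odd prime order $p$, then $x$ normalizes and acts nontrivially on the subfield subgroup $H\cong PSU_3(2^{n/p})$ when $n/p>1$, or centralizes a small group when $n/p=1$; in either event, by picking conjugates of $x$ inside $\langle H,x\rangle$ we get into $\operatorname{Aut}(PSU_3(2^{n/p}))$ and can finish by induction on $n$, the base case $PSU_3(4)$ being checked directly (e.g. from the character table via \eqref{StructConstFormula} or from the known maximal subgroups), where three conjugates of $x$ already generate a subgroup whose order is divisible by $3$. If $x$ is an inner automorphism of odd prime order $s$, identify it with an element $g\in S$ of order $s$. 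Using the fact that every semisimple element of $GU_3(q)$ lies in a torus, and every unipotent element lies in the unipotent radical of the parabolic, one sees that $g$ normalizes a subgroup $H$ that is a (possibly twisted) $PSL_2(q)$ or $PSL_2(q^2)$ section, or a torus of order dividing $q^2-q+1$. In the former case $g$ induces a nontrivial automorphism on a $PSL_2$-type group and Lemma~\ref{L_2} gives two conjugates generating a subgroup of order divisible by $3$ — hence $\beta_3(x,S)\le 2\le 3$.

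The genuinely new case, and the one I expect to be the main obstacle, is when $g$ lies in (or is related to) the small cyclic torus $T$ of order $(q^2-q+1)/d$ — for instance when $s$ is a primitive prime divisor of $q^6-1$. Such a $g$ centralizes no $PSL_2$-section, so the two-conjugate reduction fails. Here the idea is to use the normalizer $N_S(T)$, which is $T$ extended by a group of order $3$ acting as the Frobenius of the degree-$3$ extension, together with a parabolic or $PSL_2$-subgroup to which $g$ (or a conjugate) is not contained: concretely, one shows that for a suitable $h\in S$ the subgroup $\langle g,g^h\rangle$ already has order divisible by $3$ because it cannot be contained in any subgroup of order coprime to $3$ — the only proper subgroups of $PSU_3(q)$ containing $T$ are $T$ itself, $N_S(T)$ (whose order is divisible by $3$), and $S$ — so two suitably chosen conjugates suffice, unless $\langle g,g^h\rangle=T$ for all $h$, which forces $g\in\operatorname{O}_\pi$-type obstructions that cannot occur for a nontrivial element generating a nonabelian group; the third conjugate is the safety margin that makes the argument uniform across all $n$ and all odd $s$, including the small exceptional $q=4$. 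I would verify the $q=4$ base case and the torus case explicitly using the character table of $PSU_3(4)$ and the structure constant formula \eqref{StructConstFormula}, exhibiting three classes whose product contains the identity class's complement appropriately so that three conjugates generate a subgroup of order divisible by $3$, and then record that the general case follows by the subgroup-theoretic reduction above. Combining the cases yields $\beta_3(x,S)\le 3$ for every automorphism $x$ of odd prime order, as claimed.
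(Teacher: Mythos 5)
Your overall strategy is workable in principle, but as written it has a concrete hole in the case analysis for inner elements, and it is in any case enormously more labour than is needed: the paper disposes of this lemma in one line by citing \cite[Lemma~3.3]{03GuSaxl}, which already gives $\alpha(x,S)\leqslant 3$ for $S=PSU_3(q)$ and every automorphism $x$ of prime order; since $3$ divides $|S|$, the inequality $\beta_3(x,S)\leqslant\alpha(x,S)\leqslant 3$ is immediate. If you are going to argue from scratch, you must at least cover all conjugacy classes, and your dichotomy for inner elements of odd prime order $s$ --- ``either $g$ acts nontrivially on a $PSL_2$-type section, or $g$ lies in the torus of order $(q^2-q+1)/d$'' --- misses the elements of order $s\mid q+1$ that are central in a $GU_2(q)$-subgroup, i.e.\ semisimple elements with eigenvalues $(\lambda,\lambda,\lambda^{-2})$ on the natural module. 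Such an element \emph{centralizes} the $SU_2(q)\cong PSL_2(q)$ section rather than inducing a nontrivial automorphism of it, so Lemma~\ref{L_2} does not apply to it, and it does not lie in the $(q^2-q+1)$-torus either; this class is simply not treated by your argument.

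Two further steps are asserted rather than proved. In the torus case your claim that the only proper overgroups of $T$ are $T$, $\operatorname{N}_S(T)$ and $S$ is valid only when $|g|$ is a primitive prime divisor of $q^6-1$; primes dividing $(q^2-q+1)/d$ need not be primitive (they may divide $q_0^2-q_0+1$ for a subfield $q_0$), so you are forced back onto the subfield induction, which you have only sketched and whose base case you have not actually carried out. And the closing sentence about ``three classes whose product contains the identity class's complement'' does not describe a well-defined structure-constant computation. If you want a self-contained proof, the cleanest repair is to replace the whole case analysis by the observation that three conjugates of any prime-order automorphism of $PSU_3(q)$ generate the almost simple group itself (this is exactly \cite[Lemma~3.3]{03GuSaxl}); otherwise each of the gaps above must be closed separately.
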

\begin{proof} According to~\cite[Lemma~3.3]{03GuSaxl}, we have  $\alpha(x, S)\leqslant3$ and the claim follows. 
\end{proof}

\section{Properties of $\mbox{}^2F_4(q^2)$ and $\mbox{}^2F_4(2)'$}

The groups $\mbox{}^2F_4(q^2)$  discovered by R.~Ree
\cite{61Ree} in 1961 bear the name of large Ree groups or Ree groups in characteristic~$2$. Their construction and some properties are given in \cite{72Car,98GorLySol}, for example. Our notation for these groups agrees with~\cite{72Car} and differs from~\cite{98GorLySol}, where they are denoted by   $\mbox{}^2F_4(q)$. Recall that $q^2=2^{2n+1}$, and so $q$ is always irrational. Following~\cite{75Shin}, we will sometimes denote $q^2$ by~$l$ and  $\mbox{}^2F_4(q^2)$ by $\mbox{}^2F_4(l)$. In this notation, we have
\begin{equation*}
\begin{split}
|\mbox{}^2F_4(q^2)|&= q^{24}(q^2-1)(q^6+1)(q^8-1)(q^{12}+1)\\
&=l^{12}(l-1)(l^3+1)(l^4-1)(l^6+1).   
\end{split}   
\end{equation*}
The group $\mbox{}^2F_4(q^2)$ is simple unless $q^2=2$, in which case the derived subgroup $\mbox{}^2F_4(2)'$, called the Tits group, is simple and has index~$2$ in~$\mbox{}^2F_4(2)$. 
Our results concern the series $\mbox{}^2F_4(q^2)'$ and so include the Tits group, too.

The conjugacy classes of $\mbox{}^2F_4(q^2)$ are described in  K.~Shinoda's paper~\cite{75Shin}. As we deal with elements of prime order, some information about the conjugacy classes of unipotent and semisimple elements will be used. The former ones are precisely the involutions. The necessary facts about involutions are collected in Section~\ref{InvProperties}.
Observe that the difference  $\mbox{}^2F_4(2)\setminus\mbox{}^2F_4(2)'$ contains no involutions and hence no elements of prime order, see~\cite{85Atlas}. 
The required information about the maximal tori and conjugacy classes of semisimple elements is contained in Section~\ref{SemisimpleProperties}. 
The notation  $T_i$ and $t_j$ used in Lemma~\ref{maximal} below also appears in  Section~\ref{SemisimpleProperties}. 
We will need some facts about the irreducible ordinary characters of $\mbox{}^2F_4(q^2)$ partially included in the \textsf{CHEVIE} package for \textsf{Maple} \cite{Chevie}. 
The automorphism group of $\mbox{}^2F_4(q^2)$ is generated by the inner automorphisms and a field automorphism $\phi$ of order $2n+1$, where $q^2=2^{2n+1}$ as above. 
All non-inner automorphisms of prime order are conjugate to powers of~$\phi$~\cite[Proposition~4.9.1]{98GorLySol}.

\begin{lem} \label{maximal} {\rm\cite[Main Theorem]{91Mal}}
Up to conjugacy, every maximal subgroup of $L=\mbox{}^2F_4(q^2)$, $q^2=2^{2n+1}$, belongs to the following list:
\begin{itemize}
 \item[$(1)$] $P_a=[q^{22}]:\left(PSL_2(q^2)\times \mathbb{Z}_{q^2-1}\right)$, 
 \item[$(2)$] $P_b=[q^{20}]:\left(\mbox{}^2B_2(q^2)\times \mathbb{Z}_{q^2-1}\right)$, 
 \item[$(3)$] $\operatorname{N}_L(\langle t_4\rangle)=\operatorname{C}_L( t_4):2\cong SU_3(q^2):2$,  
\item[$(4)$] $\operatorname{N}_L(T_8)\cong (\mathbb{Z}_{q^2+1}\times\mathbb{Z}_{q^2+1}):GL_2(3)$, 
\item[$(5)$] $\operatorname{N}_L(T_6)\cong (\mathbb{Z}_{q^2-\sqrt{2}q+1}\times\mathbb{Z}_{q^2-\sqrt{2}q+1}):[96]$,  
\item[$(6)$] $\operatorname{N}_L(T_7)\cong (\mathbb{Z}_{q^2+\sqrt{2}q+1}\times\mathbb{Z}_{q^2+\sqrt{2}q+1}):[96]$,  
\item[$(7)$] $\operatorname{N}_L(T_{10})\cong (\mathbb{Z}_{q^4-\sqrt{2}q^3+q^2-\sqrt{2}q+1}):12$, 
\item[$(8)$] $\operatorname{N}_L(T_{11})\cong (\mathbb{Z}_{q^4+\sqrt{2}q^3+q^2+\sqrt{2}q+1}):12$, 
 \item[$(9)$]  $PGU_3(q^2):2$,  
 \item[$(10)$]  $\mbox{}^2B_2(q^2)\wr 2$,
 \item[$(11)$]  $B_2(q^2):2\cong Sp_4(q^2):2$,
\item[$(12)$]  $\mbox{}^2F_4(q_0^2)$ if $q_0^2=2^{2m+1}$ with $(2n+1)/(2m+1)\in\mathbb{P}$.
\end{itemize}
Furthermore, a field automorphism~$f$ of~$L$ leaves fixed a representative of each of the above   listed conjugacy classes of maximal subgroups and acts on this representative in an obvious way. 
\end{lem}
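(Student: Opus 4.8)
The classification of the maximal subgroups of $L$ into the twelve types is precisely Malle's Main Theorem in \cite{91Mal}, so the only point that still needs an argument is the concluding assertion about a field automorphism $f$. (It can also be read off from \cite{91Mal}, but here is how I would argue it directly.) Since any field automorphism of $L$ is $\operatorname{Inn}(L)$-conjugate to a power of the standard field automorphism $\phi$ of order $2n+1$, and since conjugating a subgroup by an element of $\operatorname{Inn}(L)$ keeps it in the same $\operatorname{Inn}(L)$-class, it is enough to prove that $\phi$ stabilizes each of the twelve classes and normalizes some representative of each: any power of $\phi$ then normalizes the same representative, and an arbitrary field automorphism is handled by conjugating the representative. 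So the plan has two steps: first, show that $\phi$ fixes all twelve classes setwise; second, exhibit a $\phi$-invariant representative in each.

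For the first step, recall that $\operatorname{Aut}(L)=\operatorname{Inn}(L)\rtimes\langle\phi\rangle$, so $\operatorname{Aut}(L)$ acts on the finite set of $\operatorname{Inn}(L)$-classes of maximal subgroups of $L$ through the cyclic quotient $\operatorname{Out}(L)=\langle\bar\phi\rangle$, and $\phi$ permutes these classes while preserving the isomorphism type and the order of a representative. Scanning Malle's list, the types $(1)$, $(2)$, $(4)$--$(8)$, $(10)$, $(11)$ are realized by pairwise non-isomorphic subgroups of pairwise distinct orders; $SU_3(q^2):2$ and $PGU_3(q^2):2$ again have distinct orders; and for each prime $p\mid(2n+1)$ the subfield subgroups $\mbox{}^2F_4(2^{(2n+1)/p})$ form a single $\operatorname{Inn}(L)$-class, which $\phi$ obviously maps to itself. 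Hence each of the twelve classes is $\phi$-invariant.

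For the second step --- the one that uses the actual construction of $L$ rather than mere bookkeeping --- I would exhibit in each class a representative visibly normalized by $\phi$, and then read off how $\phi$ acts on it. For $(1)$ and $(2)$, take the two maximal parabolics containing the $\phi$-stable Borel subgroup $B=UH$ built from the positive root subgroups and the $\phi$-stable split torus $H$; here $\phi$ acts on the Levi sections $PSL_2(q^2)$ and $\mbox{}^2B_2(q^2)$ as their own field automorphisms. For $(4)$--$(8)$, take $\operatorname{N}_L(T_i)$ for $\phi$-stable maximal tori $T_i$, which exist because $\phi$ acts trivially on the (twisted) Weyl-group data and thus normalizes a representative of every maximal-torus class. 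For $(3)$, $(9)$, $(10)$, $(11)$, take the normalizers of the relevant $\phi$-stable semisimple element $t_4$ and of the $\phi$-stable closed subsystems of the ambient $F_4$ root system, on each of which $\phi$ again induces a field automorphism. For $(12)$, take the subfield subgroup on which $\phi$ restricts to a field automorphism. Since, by the first step, each type is a single $\operatorname{Inn}(L)$-class, any such $\phi$-invariant maximal subgroup automatically represents its class.

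The main obstacle will be this second step, although the difficulty is care rather than depth: one has to verify that the ``standard'' constructions above really are $\phi$-stable and that Malle's structural description determines the $\operatorname{Inn}(L)$-class uniquely in every case, the only places genuinely needing a second look being the pair $SU_3(q^2):2$ versus $PGU_3(q^2):2$ and the subfield subgroups $(12)$. All the structural information required --- the construction of $\mbox{}^2F_4(q^2)$ together with its Borel subgroup, maximal tori, subsystem subgroups, and field automorphism --- is available in \cite{72Car,98GorLySol}, and the uniqueness of the classes follows from Malle's analysis in \cite{91Mal}.
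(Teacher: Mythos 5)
Your proposal is correct and takes essentially the same route as the paper, which offers no proof at all beyond the citation of Malle's Main Theorem in \cite{91Mal}. Your additional two-step justification of the field-automorphism clause (invariance of each class by order/isomorphism-type bookkeeping, then exhibiting explicit $\phi$-stable representatives via the $\phi$-stable Borel subgroup, tori, subsystem subgroups, and subfield subgroups) is a sound and standard elaboration of what the paper leaves implicit in that citation.
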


\section{Properties of involutions in $\mbox{}^2F_4(q^2)$}\label{InvProperties}

\begin{lem} \label{involutions} The group 
$L=\mbox{}^2F_4(q^2)$ contains precisely two conjugacy classes of involutions $2A$ and $2B$, where the notation is such that class $2A$ consists of the $2$-central elements, and class $2B$~of the non-$2$-central elements. Both classes are contained in~$L'$. Properties of involutions in theses classes are as follows:
\begin{itemize}
 \item[$(2A)$] Let $u\in 2A$. Then 
 \begin{itemize}
  \item[$(2A1)$] $\operatorname{C}_L(u)\leqslant P_b$, $\operatorname{C}_L(u)/\operatorname{O}_2(\operatorname{C}_L(u))\cong\mbox{}^2B_2(q^2)$, and $|\!\operatorname{C}_L(u)|=q^{24}(q^2-1)(q^4+1)$; in particular, $u$ centralizes an element of order $5$ and does not centralize elements of order~$3$;
  \item[$(2A2)$] $u$ inverts an element of order $5$ and does not invert elements of order~$3$;
  \item[$(2A3)$] $\beta_3(u,L)=3$, $\beta_5(u,L)=2$.
 \end{itemize}

 \item[$(2B)$] Let $u\in 2B$. Then 
 \begin{itemize}
  \item[$(2B1)$] $\operatorname{C}_L(u)\leqslant P_a$, $\operatorname{C}_L(u)/\operatorname{O}_2(\operatorname{C}_L(u))\cong PSL_2(q^2)$, and $|\!\operatorname{C}_L(u)|=q^{20}(q^4-1)$; in particular, $u$ centralizes an element of order $3$ and does not centralize elements of order~$5$;
  \item[$(2B2)$] $u$ inverts elements of orders $3$ and~$5$.
  \item[$(2B3)$] $\beta_3(u,L)=\beta_5(u,L)=2$.
 \end{itemize}
 
\end{itemize}

\end{lem}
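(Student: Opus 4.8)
The plan is to establish each assertion in Lemma~\ref{involutions} by combining Shinoda's description \cite{75Shin} of the conjugacy classes of $L = {}^2F_4(q^2)$ with the centralizer structure of involutions and the structure of maximal subgroups from Lemma~\ref{maximal}. First I would record that there are exactly two classes of involutions (unipotent elements, since the characteristic is $2$): this is in Shinoda's tables, and one labels them $2A$, $2B$ by whether the centralizer contains a Sylow $2$-subgroup. The centralizer orders $|\operatorname{C}_L(u)| = q^{24}(q^2-1)(q^4+1)$ for $u\in 2A$ and $q^{20}(q^4-1)$ for $u\in 2B$ come directly from \cite{75Shin}; the quotients by $\operatorname{O}_2$ being ${}^2B_2(q^2)$ and $PSL_2(q^2)$ respectively, together with the containments $\operatorname{C}_L(u)\leqslant P_b$ and $\operatorname{C}_L(u)\leqslant P_a$, follow by matching these orders against the structure of the parabolics $P_a$, $P_b$ in Lemma~\ref{maximal}. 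From $|{}^2B_2(q^2)| = q^4(q^2-1)(q^4+1)$ and $5\mid q^4+1$ (as $q^4\equiv -1\pmod 5$ since $q^2\equiv \pm 2\pmod 5$) one gets that a $2A$-involution centralizes an element of order $5$; and since $3\nmid |\operatorname{C}_L(u)|$ for $u\in 2A$ (note $3\mid q^2+1$ but $q^2+1\nmid |\operatorname{C}_L(u)|$, while $3\nmid q^2-1$ and $3\nmid q^4+1$), it centralizes no element of order~$3$. Dually, $3\mid q^4-1$ gives that a $2B$-involution centralizes an element of order~$3$, and $5\nmid q^{20}(q^4-1)$ when $5\nmid q^4-1$, i.e. when $5\mid q^4+1$, so it centralizes no element of order~$5$. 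This proves (2A1), (2B1).

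Next come the ``inverting'' statements (2A2), (2B2). Here I would use the dihedral-group trick: if an involution $u$ inverts an element $g$ of odd order $k$, then $\langle u,g\rangle$ is dihedral of order $2k$, and conversely a generating involution of such a dihedral subgroup inverts the rotation. For $2A$: to see $u$ inverts an element of order~$5$, exhibit a dihedral subgroup $D_{10}$ containing a $2A$-involution — e.g.\ inside the subgroup ${}^2B_2(q^2)\wr 2$ of item~(10) of Lemma~\ref{maximal}, or inside $\operatorname{C}_L(v)$ for a suitable $5$-element $v$, using that $\operatorname{N}_L(\langle v\rangle)/\operatorname{C}_L(\langle v\rangle)$ has even order. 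To see $u$ does \emph{not} invert an element of order~$3$: an element inverting a $3$-element lies in $\operatorname{N}_L(\langle g\rangle)\setminus\operatorname{C}_L(\langle g\rangle)$; one checks via Lemma~\ref{maximal} (the relevant overgroups of a $3$-element are $SU_3(q^2){:}2$, $Sp_4(q^2){:}2$, $PGU_3(q^2){:}2$, and the torus normalizers $\operatorname{N}_L(T_8)$) that every involution inverting a $3$-element is $2$-central, i.e.\ in class $2A$ — wait, that is the wrong direction; rather one shows such an inverting involution must lie in class $2B$, equivalently that no $2A$-involution is fused into the set of involutions of these dihedral subgroups $D_6$. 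Concretely, in $SU_3(q^2)$ an involution centralizing nothing of order $3$ forces the inverted $3$-element together with the involution to lie in a subgroup $PSL_2(q^2)$ or $Sp_4$-like section whose involutions are all of type $2B$; tracking fusion of involutions from these subgroups into $L$ (again via centralizer orders) completes (2A2). The statement (2B2), that a $2B$-involution inverts elements of both orders $3$ and~$5$, is proved the same way by producing $D_6$ and $D_{10}$ subgroups inside $\operatorname{C}_L(u)$'s overgroup $P_a$ (whose Levi quotient $PSL_2(q^2)\times\mathbb Z_{q^2-1}$ visibly contains such dihedral subgroups, since $PSL_2(q^2)$ contains $D_{2(q^2\pm1)}$ and $3,5\mid (q^2-1)(q^2+1)$).

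Finally, (2A3) and (2B3) give the values of $\beta_3(u,L)$ and $\beta_5(u,L)$. The \emph{lower} bounds are immediate from (2A1)--(2A2) and (2B1)--(2B2): $\beta_3(u,L)\geqslant 2$ always (a single element generates a $2$-group), and $\beta_3(u,L)\geqslant 3$ for $u\in 2A$ because any two conjugates of $u$ generate a dihedral group, which cannot have order divisible by~$3$ unless $u$ inverts a $3$-element, contradicting (2A2); similarly $\beta_3(u,L),\beta_5(u,L)\geqslant 2$ and, for $u\in 2A$, $\beta_5(u,L)\geqslant 2$ is trivial. For the \emph{upper} bounds, for $\beta_5(u,L)\leqslant 2$ with $u\in 2A$ one uses (2A2) directly: $u$ and a conjugate $u^g$ can be chosen to generate a $D_{10}$, so some two conjugates generate a group of order divisible by~$5$. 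For $\beta_3(u,L)\leqslant 3$ with $u\in 2A$, I would exhibit three conjugates of $u$ generating a subgroup whose order is divisible by~$3$ — the natural candidate is a subgroup $Sp_4(q^2)$ or $SU_3(q^2)$ of $L$ (items (11), (9) of Lemma~\ref{maximal}) in which $2A$-involutions of $L$ lie and which is generated by three of its involutions with $3 \mid$ order, e.g.\ using that $Sp_4(q^2)$ is generated by three conjugate involutions whose product has order divisible by~$3$ (verifiable from the class multiplication coefficient formula~\eqref{StructConstFormula} applied in $Sp_4(q^2)$, or more cheaply by noting $Sp_4(q^2)\supseteq (SL_2(q^2)\times SL_2(q^2))$ with $3\mid q^2\pm1$ and invoking Lemma~\ref{Sp4}-type reasoning in reverse). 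For $2B$, $\beta_3(u,L)=\beta_5(u,L)=2$ follows at once from (2B2) via the dihedral trick. \textbf{The main obstacle} I anticipate is the non-inverting claims in (2A2) — establishing that \emph{no} $2A$-involution inverts a $3$-element — since this is a statement about the absence of a $D_6$ through a $2A$-involution and requires careful control of the fusion of involutions from the various overgroups of $3$-elements listed in Lemma~\ref{maximal} back into the two $L$-classes; I would handle it by a uniform centralizer-order argument, checking in each overgroup $M$ that an involution of $M$ inverting a $3$-element of $M$ has $\operatorname{C}_M(\cdot)$ (hence $\operatorname{C}_L(\cdot)$) of order not divisible by the prime pattern characterizing class $2A$.
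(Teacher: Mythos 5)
Your handling of $(2A1)$, $(2B1)$ and of the lower bounds in $(2A3)$, $(2B3)$ is essentially right (the paper cites Aschbacher--Seitz \cite{AschSei} for the two classes and the centralizer structure, but your divisibility arguments for the ``centralizes/does not centralize'' conclusions are the same). For the \emph{positive} inverting statements your route genuinely differs from the paper's: instead of hunting for dihedral subgroups inside $\mbox{}^2B_2(q^2)\wr 2$ or inside normalizers of $5$-elements, the paper passes to a subgroup $L_0\cong\mbox{}^2F_4(2)'$, identifies its involution classes $2A_0,2B_0$ with $2A,2B$ via the centralizer orders $2^{11}\cdot 5$ and $2^9\cdot 3$ together with $(2A1)$, $(2B1)$, and then reads off nonzero class multiplication coefficients $(2A_0,2A_0,5A_0)$, $(2B_0,2B_0,3A_0)$, $(2B_0,2B_0,5A_0)$ from the \textsc{Atlas} character table via formula~(\ref{StructConstFormula}). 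This sidesteps exactly the fusion question your construction would force you to settle: an involution of $\mbox{}^2B_2(q^2)$ has centralizer there equal to a Sylow $2$-subgroup, so deciding its $L$-class from inside that overgroup is not immediate.

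There are two genuine gaps. First, the non-inverting half of $(2A2)$ --- the part you yourself flag as the main obstacle --- is not actually proved; your sketch even reverses direction mid-argument and ends with ``tracking fusion \dots completes $(2A2)$'' without supplying the mechanism. The missing input is concrete: all elements $y$ of order $3$ are conjugate, with $\operatorname{C}_L(y)\cong SU_3(q^2)$ and $\operatorname{N}_L(\langle y\rangle)\cong SU_3(q^2){:}2$ (extension by a graph automorphism); by \cite[Theorem~4.9.2(b2),(f)]{98GorLySol} every involution in $\operatorname{N}_L(\langle y\rangle)\setminus \operatorname{C}_L(y)$ is $\operatorname{C}_L(y)$-conjugate to the canonical graph involution, whose centralizer in $SU_3(q^2)$ is $Sp_2(q^2)$ and hence contains elements of order $3$ --- contradicting $(2A1)$ if that involution lay in $2A$. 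Without this (or an equivalent computation) the inequality $\beta_3(u,L)>2$ for $u\in 2A$ is unsupported. Second, the upper bound $\beta_3(u,L)\leqslant 3$ for $u\in 2A$ is only a plan: you would still need to determine which involution classes of $Sp_4(q^2)$ fuse into $2A$ and then carry out a structure-constant or generation argument there (and note that Lemma~\ref{Sp4} concerns elements of \emph{odd} prime order, so it cannot be ``invoked in reverse'' for involutions). The paper instead locates $H=B_2(2){:}2\cong\operatorname{Aut}(A_6)$ inside the maximal subgroup $B_2(q^2){:}2$, observes that the outer involution of $H$ centralizes an element of order $5$ and so lies in $2A$ by $(2A1)$ and $(2B1)$, and quotes $\beta_3(u,A_6)=3$ from \cite[Proposition~1.5]{21YangReVd}. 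Filling these two points would complete your argument.
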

\begin{proof} The assertion about the number of classes of involutions in $L$ and assertions $(2A1)$ and $(2B1)$ follow from~\cite[(18.6)]{AschSei}. We consider a subgroup $L_0$ of $L$ isomorphic to~$\mbox{}^2F_4(2)'$. According to \cite{85Atlas}, $L_0$ also has two conjugacy classes $2A_0$ and $2B_0$ of involutions with centralizers of orders $10240=2^{11}\cdot 5$ and $1536=2^{9}\cdot 3$. By $(2A1)$ and $(2B1)$, we conclude that these involutions are representatives of classes $2A$ and $2B$ of $L$, respectively. Since $L=L'$ for $q^2>2$, we have $2A\cup 2B\subseteq L'$. Note that $L_0$~also has unique conjugacy classes of elements of orders $3$ and $5$ which we will denote by $3A_0$ and $5A_0$. Using~(\ref{StructConstFormula}) and the character table of $L_0$ in \cite{85Atlas}, we deduce that there are $5$ pairs of involutions $(u_1,u_2)\in 2A_0\times 2A_0$ whose product $u_1u_2$ equals a fixed element of $5A_0$. Every such pair generates a dihedral group of order~$10$, and every involution in this group inverts an element of order~$5$. We similarly deduce that there are $36$ pairs $(u_1,u_2)\in2B_0\times 2B_0$ whose product $u_1u_2$ is a fixed element of order $3$ in~$L_0$ and $25$ pairs whose product is a fixed element of order~$5$. Assertion $(2B2)$ and a part of assertion $(2A2)$ are proven. To finish the proof of $(2A2)$, it remains to see that the involutions in $2A$ do not invert an element of order~$3$. According to ~\cite[Theorem~4.7.3 and Table~4.7.3A]{98GorLySol} (see also \cite{LbkHome}), all elements of $L$ of order~$3$ are conjugate, and if $y\in L$ has order~$3$ then 
$$
C:=\operatorname{C}_L(y)\cong SU_3(q^2),\quad  N:=\operatorname{N}_L(\langle y\rangle)\cong SU_3(q^2):2,
$$
where $N$~is the extension of $SU_3(q^2)$ by a graph automorphism. Therefore, if $u\in 2A$ inverts $y$  then $u\in N\setminus C$. From~\cite[Theorem~4.9.2(f)]{98GorLySol}, it follows that all involutions in $ N\setminus C$ are conjugate by elements of~$C$ to the canonical graph involution and $\operatorname{C}_C(u)\cong Sp_2(q^2)$ by~\cite[Theorem~4.9.2(b2)]{98GorLySol}. But then $u$ centralizes an element of order~$3$, contrary to ~$(2A1)$. Assertion~$(2A2)$ is now fully proven. Due to the known properties of dihedral groups, $(2B2)$ implies~$(2B3)$ and $(2A2)$~implies both $\beta_5(u,L)=2$ and $\beta_3(u,L)>2$ for $u\in 2A$. To establish $(2A3)$ and complete the proof, it remains to show that $\beta_3(u,L)\leqslant 3$ for $u\in 2A$. To this end, we choose in a maximal subgroup $B_2(q^2):2$ a subgroup $H=B_2(2):2\cong \operatorname{Aut}(A_6)\cong S_6:2$ and an involution $u\in H\setminus S_6$. Such an involution is determined in $H$ up to conjugacy, and~$\operatorname{C}_H(u)$ contains an element of order~$5$, see~\cite{85Atlas}. Consequently, $u\in 2A$ by $(2A1)$ and $(2B1)$. In~\cite[Proposition~1.5]{21YangReVd}, it was proven that   $\beta_3(u,A_6)=3$, whence $\beta_3(u,L)\leqslant 3$. The proof is complete. 
 \end{proof}

\section{Properties of semisimple elements in $\mbox{}^2F_4(q^2)$} \label{SemisimpleProperties}

\begin{lem} \label{torilem} {\rm \cite[Theorem~3.1 and Table~III]{75Shin}} The maximal tori of 
$\mbox{}^2F_4(q^2)$ and their structure are as given Table~{\rm \ref{tori}}. 
\end{lem}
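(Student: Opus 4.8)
The statement is a verbatim restatement of \cite[Theorem~3.1]{75Shin} (with the data gathered in \cite[Table~III]{75Shin}), so the plan is simply to cite Shinoda's classification and transcribe it into Table~\ref{tori} in the notation of the present paper. For the reader's convenience I would, in addition, sketch how the table can be reconstructed and checked from the theory of maximal tori of finite groups of Lie type, and explain the bookkeeping that makes it match the rest of the paper.

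First I would record that $L=\mbox{}^2F_4(q^2)$ arises as $\mathbf{G}^F$, where $\mathbf{G}$ is a simple algebraic group of type $F_4$ over $\overline{\mathbb{F}}_2$ and $F$ is a Frobenius endomorphism of Ree type with $F^2$ the standard $2^{2n+1}$-power Frobenius; on the $4$-dimensional reflection representation $V$ of $W=W(F_4)$ the map $F$ acts as $q\rho$, where $\rho$ is the exceptional graph automorphism of order $2$ interchanging long and short roots and $q$ is Shinoda's irrational parameter with $q^2=l=2^{2n+1}$. By the standard theory (see \cite{72Car}), the $L$-conjugacy classes of maximal tori of $L$ are parametrized by the $\rho$-twisted conjugacy classes of $W$; one checks there are exactly eleven of them --- Shinoda's $T_1,\dots,T_{11}$. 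For a representative $w$ in the coset $W\rho$ the order of the corresponding torus $T_w$ equals $|\det_V(q\,w-1)|$, and because $\rho$ permutes the two root lengths these determinants come out as products of the twisted cyclotomic factors $q^2\pm\sqrt{2}q+1$, $q^2+1$, $q^2\pm 1$ and $q^4\pm\sqrt{2}q^3+q^2\pm\sqrt{2}q+1$ --- precisely the factorizations recorded in~\eqref{phi4_12_dec} and already visible in the torus normalizers listed in Lemma~\ref{maximal}. The abelian-group structure of each $T_w$, i.e. its decomposition as a direct product of cyclic groups, would then be read off from the Smith normal form of the integer matrix $q\,w-1$.

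Finally I would align the indexing so that the tori $T_4,T_6,T_7,T_8,T_{10},T_{11}$ appearing in Lemma~\ref{maximal} and the semisimple class representatives $t_j$ used in Section~\ref{SemisimpleProperties} agree with Shinoda's labelling, and carry out the consistency check that the eleven torus orders, weighted by the sizes of the corresponding $\rho$-twisted classes of $W$, reproduce the semisimple part of $|L|$. I do not expect a genuine mathematical obstacle here: there is no hard theorem to establish beyond what is in \cite{75Shin}, and the only real care is required in the three-way reconciliation of notation (\cite{72Car}, \cite{75Shin}, and the present paper) and in placing the $\sqrt{2}q$-twists correctly, which is already settled by the explicit factorizations in~\eqref{phi4_12_dec}.
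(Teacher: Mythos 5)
The paper offers no proof of this lemma beyond the citation of Shinoda's \cite[Theorem~3.1 and Table~III]{75Shin}, and your proposal correctly identifies that this is all that is required; your additional sketch of how the table could be reconstructed from twisted conjugacy classes of $W(F_4)$ is sound supplementary exposition but not part of the paper's argument. So the proposal is correct and takes essentially the same approach as the paper.
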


{
\begin{center}
\begin{table}
\caption
{The maximal tori of $\mbox{}^2F_4(q^2)$}\label{tori}
\begin{center}\begin{tabular}
{|l|c|r|}
\hline
\multirow{2}{*}{Torus} 
& \multirow{2}{*}{Structure}& Some\\
&&overgroups  \\
\hline\hline
\multirow{2}{*}{$T_1$} 
& \multirow{2}{*}{$\left(\mathbb{Z}_{q^2-1}\right)^2$}& \multirow{2}{*}{$Sp_4(q^2)$}\\
&&\\
\hline
\multirow{2}{*}{$T_2$} 
& \multirow{2}{*}{$\mathbb{Z}_{q^4-1}$}& {$Sp_4(q^2)\vphantom{A^{A^A}}$,}\\
& &$PGU_3(q^2)$\\
\hline
\multirow{2}{*}{$T_3$} 
& \multirow{2}{*}{$\mathbb{Z}_{q^2-1}\times \mathbb{Z}_{q^2-\sqrt{2}q+1}$}& 
\multirow{2}{*}{$\mbox{}^2B_2(q^2)\wr 2$}\\
&&\\
\hline
\multirow{2}{*}{$T_4$} 
& \multirow{2}{*}{$\mathbb{Z}_{q^2-1}\times \mathbb{Z}_{q^2+\sqrt{2}q+1}$}& \multirow{2}{*}{$\mbox{}^2B_2(q^2)\wr 2$}\\
&&\\
\hline
\multirow{2}{*}{$T_5$} 
& \multirow{2}{*}{$\mathbb{Z}_{q^4+1}$}& \multirow{2}{*}{$Sp_4(q^2)$}\\
&&\\
\hline
\multirow{2}{*}{$T_6$} 
& \multirow{2}{*}{$\left(\mathbb{Z}_{q^2-\sqrt{2}q+1}\right)^2$}& \multirow{2}{*}{$\mbox{}^2B_2(q^2)\wr 2$}\\
&&\\
\hline
\multirow{2}{*}{$T_7$} 
& \multirow{2}{*}{$\left(\mathbb{Z}_{q^2+\sqrt{2}q+1}\right)^2$}& \multirow{2}{*}{$\mbox{}^2B_2(q^2)\wr 2$}\\
&&\\
\hline
\multirow{2}{*}{$T_{8}$} 
& \multirow{2}{*}{$\left(\mathbb{Z}_{q^2+1}\right)^2$}& \multirow{2}{*}{$Sp_4(q^2)$}\\
&&\\
\hline
\multirow{2}{*}{$T_{9}$} 
& \multirow{2}{*}{$\mathbb{Z}_{q^4-q^2+1}$}& \multirow{2}{*}{$PGU_3(q^2)$}\\
&&\\
\hline
\multirow{2}{*}{$T_{10}$} 
& \multirow{2}{*}{$\mathbb{Z}_{q^4-\sqrt{2}q^3+q^2-\sqrt{2}q+1}$}& \\
&&\\
\hline
\multirow{2}{*}{$T_{11}$} 
& \multirow{2}{*}{$\mathbb{Z}_{q^4+\sqrt{2}q^3+q^2+\sqrt{2}q+1}$}& \\
&&\\
\hline
\end{tabular}\end{center}
\end{table}
\end{center}
}


{
\scriptsize
\begin{landscape} 
\begin{center}
\begin{table}
\caption
{Conjugacy classes of semisimple elements of $L=\mbox{}^2F_4(q^2)$}\label{Elements}
\begin{center}\begin{tabular}{|c|c|c|c|r|c|}\hline
\multirow{2}{*}{Notation} &Number& \multirow{2}{*}{$|\operatorname{C}_L(t_j)|$}& $T_i$, & {$\beta_3(t_j,L')$} or&\multirow{2}{*}{Justification} \\
 &of classes&&  $t_j\in T_i$ &$|t_j|$ nonprime& \\
\hline\hline

\multirow{2}{*}{$t_1$}&\multirow{2}{*}{$\displaystyle\frac{1}{2}(q^2-2)$}&\multirow{2}{*}{$q^4(q^2-1)^2(q^4+1)$} &\multirow{2}{*}{$T_1$}&\multirow{2}{*}{$2$}&$T_1\leqslant Sp_4(q^2)$, \\
      &&&&& Lemma~\ref{Sp4}\\
\hline

\multirow{2}{*}{$t_2$}&\multirow{2}{*}{$\displaystyle\frac{1}{2}(q^2-2)$}&\multirow{2}{*}{$q^2(q^2-1)(q^4-1)$}&\multirow{2}{*}{$T_1$}&\multirow{2}{*}{$2$}&$T_1\leqslant Sp_4(q^2)$, \\
 &&&&& Lemma~\ref{Sp4}\\
\hline

\multirow{2}{*}{$t_3$}  &\multirow{2}{*}{$\displaystyle\frac{1}{16}(q^2-2)(q^2-8)$}&\multirow{2}{*}{$(q^2-1)^2$}  &\multirow{2}{*}{$T_1$}&\multirow{2}{*}{$2$}&$T_1\leqslant Sp_4(q^2)$, \\
    &&&&& Lemma~\ref{Sp4}\\
\hline

\multirow{2}{*}{$t_4$}&\multirow{2}{*}{$1$}&\multirow{2}{*}{$q^6(q^4-1)(q^6+1)$}&\multirow{2}{*}{$T_2$}&\multirow{2}{*}{$1$}& {$|t_4|=3$,} \\
    &&&&& Lemma~\ref{u3}\\
\hline

\multirow{2}{*}{$t_5$}&\multirow{2}{*}{$\displaystyle\frac{1}{2}(q^2-2)$}&\multirow{2}{*}{$q^2(q^2+1)(q^4-1)$}&\multirow{2}{*}{$T_2$}&\multirow{2}{*}{$2$}& $T_2\leqslant Sp_4(q^2)$, \\
  &&&&& Lemma~\ref{Sp4} \\
\hline

\multirow{2}{*}{$t_6$}&\multirow{2}{*}{$\displaystyle\frac{1}{2}(q^2-2)$}&\multirow{2}{*}{$q^2(q^2+1)(q^4-1)$}&\multirow{2}{*}{$T_2$}&\multirow{2}{*}{$2$}& $T_2\leqslant Sp_4(q^2)$, \\
   &&&&& Lemma~\ref{Sp4} \\
\hline

\multirow{2}{*}{$t_7$}&\multirow{2}{*}{$\displaystyle\frac{1}{4}(q^2-\sqrt{2}q)$}&\multirow{2}{*}{$q^4(q^2-\sqrt{2}q+1)(q^2-1)(q^4+1)$}&\multirow{2}{*}{$T_{3}$}&\multirow{2}{*}{$2$}&$\alpha(t_7,L')=2$, \\
    &&&&&Lemma~\ref{alpphat7t9}  \\
\hline

\multirow{2}{*}{$t_8$}&\multirow{2}{*}{$\displaystyle\frac{1}{8}(q^2-2)(q^2-\sqrt{2}q)$}&\multirow{2}{*}{$(q^2-1)(q^2-\sqrt{2}q+1)$}&\multirow{2}{*}{$T_{3}$}&$|t_8|$ is& \multirow{2}{*}{Lemma~\ref{regularT3T4}} \\
   &&&&nonprime& \\
    \hline
    
\multirow{2}{*}{$t_9$}&\multirow{2}{*}{$\displaystyle\frac{1}{4}(q^2+\sqrt{2}q)$}&\multirow{2}{*}{$q^4(q^2+\sqrt{2}q+1)(q^2-1)(q^4+1)$}&\multirow{2}{*}{$T_{4}$}&\multirow{2}{*}{$2$}&$\alpha(t_9,L')=2$, \\
    &&&&&Lemma~\ref{alpphat7t9} \\
    \hline
    
\multirow{2}{*}{$t_{10}$}&\multirow{2}{*}{$\displaystyle\frac{1}{8}(q^2-2)(q^2+\sqrt{2}q)$}&\multirow{2}{*}{$(q^2-1)(q^2+\sqrt{2}q+1)$}&\multirow{2}{*}{$T_{4}$}& $|t_{10}|$ is&\multirow{2}{*}{Lemma~\ref{regularT3T4}}\\
    &&&&nonprime&  \\
    \hline
       
\multirow{2}{*}{$t_{11}$}&\multirow{2}{*}{$\displaystyle\frac{1}{16}q^2(q^2-\sqrt{2}q)$}&\multirow{2}{*}{$q^4+1$}&\multirow{2}{*}{$T_{5}$}&\multirow{2}{*}{$2$}&$T_{5}\leqslant Sp_4(q^2)$, \\
    &&&&&Lemma~\ref{Sp4} \\ 
    \hline
    
 \multirow{2}{*}{$t_{12}$}&\multirow{2}{*}{$\displaystyle\frac{1}{96}(q^2-\sqrt{2}q-4)(q^2-\sqrt{2}q)$}&\multirow{2}{*}{$(q^2-\sqrt{2}q+1)^2$}&\multirow{2}{*}{$T_{6}$}&\multirow{2}{*}{$2$}&$\alpha(t_{12},L')=2$, \\
    &&&&& Lemma~\ref{alpphat12t13}\\   \hline
    
\multirow{2}{*}{$t_{13}$}&\multirow{2}{*}{$\displaystyle\frac{1}{96}(q^2+\sqrt{2}q-4)(q^2+\sqrt{2}q)$}&\multirow{2}{*}{$(q^2+\sqrt{2}q+1)^2$}&\multirow{2}{*}{$T_{7}$}&\multirow{2}{*}{$2$}&$\alpha(t_{13},L')=2$, \\
    &&&&& Lemma~\ref{alpphat12t13} \\  \hline
    
 \multirow{2}{*}{$t_{14}$}&\multirow{2}{*}{$\displaystyle\frac{1}{48}(q^2-2)(q^2-8)$}&\multirow{2}{*}{$(q^2+1)^2$}&\multirow{2}{*}{$T_{8}$}&\multirow{2}{*}{$2$}&$T_{8}\leqslant Sp_4(q^2)$,  \\
    &&&&&Lemma~\ref{Sp4} \\  \hline
    
 \multirow{2}{*}{$t_{15}$}&\multirow{2}{*}{$\displaystyle\frac{1}{6}(q^2-2)(q^2+1)$}&\multirow{2}{*}{$q^4-q^2+1$}&\multirow{2}{*}{$T_{9}$}&\multirow{2}{*}{$\leqslant 3$}&$T_{9}\leqslant PGU_3(q^2)$, \\
    &&&&& Lemma~\ref{U3} \\   \hline
    
 \multirow{2}{*}{$t_{16}$}&\multirow{2}{*}{$\displaystyle\frac{1}{12}(q^4-\sqrt{2}q^3+q^2-\sqrt{2}q)$}&\multirow{2}{*}{$q^4-\sqrt{2}q^3+q^2-\sqrt{2}q+1$}&\multirow{2}{*}{$T_{10}$}&\multirow{2}{*}{$2$}& 
 \multirow{2}{*}{Lemma~\ref{toriT10T11}}\\
    &&&&&  \\    \hline
 
 \multirow{2}{*}{$t_{17}$}&\multirow{2}{*}{$\displaystyle\frac{1}{12}(q^4+\sqrt{2}q^3+q^2+\sqrt{2}q)$}&\multirow{2}{*}{$q^4+\sqrt{2}q^3+q^2+\sqrt{2}q+1$}&\multirow{2}{*}{$T_{11}$}&\multirow{2}{*}{$2$}&
 \multirow{2}{*}{Lemma~\ref{toriT10T11}}\\
    &&&&& \\
\hline

\end{tabular}\end{center}
\end{table}
\end{center}

\end{landscape}
}

We note that the maximal tori contained in the subsystem subgroups of maximal rank $B_2(q^2)\cong Sp_{2}(q^2)$, $PGU_3(q^2)$, and ${}^2B_2(q^2)\times {}^2B_2(q^2)$ are also maximal tori of  $\mbox{}^2F_4(q^2)$. From the classification of such maximal tori \cite[Theorem~3.1 and Table~III]{75Shin}, the information in the last column of Table~{\rm \ref{tori}} holds.

Table~\ref{Elements} collects all the necessary information about the conjugacy classes of nonidentity semisimple elements of $L=\mbox{}^2F_4(q^2)$. 
According to ~\cite[Theorem~3.2 and Table~IV]{75Shin}, depending on the structure and order of the centralizer, the classes are split into $17$ groups with a common notation $t_j$ for a representative of each group, $j=1,\dots,17$. 
The information in the first three columns of Table~\ref{Elements} is taken from~\cite[Columns 1---3 of Table~IV]{75Shin}. 
A torus $T_i$ containing $t_j$ indicated in the fourth column of Table~\ref{Elements} can be easily found from both the generic form of $t_j$ in~\cite[Column 1 of Table~IV]{75Shin} and the generic form of elements in each torus~$T_i$, see~\cite[The list in the end of Section (3.1)]{75Shin}. 
The key r\^ole in this paper is played by the information in the fifth and sixth columns of Table~\ref{Elements}. 
The fifth column gives either the precise value or an upper bound for $\beta_3(t_j,L')$, or else states that the order of $t_j$ is not prime which spares us from its calculation. 
The last column justifies the fifth one by giving a relevant reference.

\begin{lem} \label{regularT3T4} Let  $x$~be a regular element of one of the tori $T_{3}$ or $T_{4}$ in  $L=\mbox{}^2F_4(q^2)$,  see Table~{\rm \ref{tori}}. Then $|x|$ is not prime. 
\end{lem}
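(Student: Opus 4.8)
The plan is to exploit the fact that a regular element $x$ of $T_3$ or $T_4$ has centralizer equal to the full torus, so $|\operatorname{C}_L(x)|$ is one of the values $(q^2-1)(q^2-\sqrt2 q+1)$ or $(q^2-1)(q^2+\sqrt2 q+1)$ read off from Table~\ref{Elements} (the classes $t_8$ and $t_{10}$), and then to argue that an element of prime order cannot have a centralizer of this shape. Concretely, the structure of $T_3$ is $\mathbb{Z}_{q^2-1}\times\mathbb{Z}_{q^2-\sqrt2 q+1}$ and of $T_4$ is $\mathbb{Z}_{q^2-1}\times\mathbb{Z}_{q^2+\sqrt2 q+1}$; the two direct factors have coprime orders (the first factor has order dividing $l-1$ with $l=q^2$, while the second divides $\Phi_4(l)=l^2+1$, and $\gcd(l-1,l^2+1)$ divides $2$, hence is $1$ since $l^2+1$ is odd). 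So $T_3$ and $T_4$ are cyclic of orders $(q^2-1)(q^2\mp\sqrt2 q+1)$ respectively.

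First I would pin down what ``regular'' forces: a regular semisimple element of a maximal torus $T$ in a group of Lie type has $\operatorname{C}_L(x)=T$ (for the connected centralizer; since these tori are self-centralizing one gets exactly $T$), so $x$ generates a cyclic subgroup whose centralizer is all of $T$. If moreover $|x|=p$ were prime, then $x$ lies in a unique Sylow $p$-subgroup direction inside the cyclic group $T$, and $\operatorname{C}_L(x)=T$ would have to be a $p$-group times nothing — but $T$ has order $(q^2-1)(q^2\mp\sqrt2 q+1)$ which is visibly divisible by at least two distinct primes. The key elementary point is that $q^2-1=2^{2n+1}-1$ and $q^2\mp\sqrt2 q+1$ are each greater than $1$ and are coprime (coprimality as above), so their product has at least two prime divisors; in fact $q^2-1>1$ always since $q^2\geqslant 8$, and $q^2-\sqrt2 q+1>1$, $q^2+\sqrt2 q+1>1$ likewise. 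Hence $|x|$, which equals $|T|/|\operatorname{C}_T(x)\colon\langle x\rangle|$ — no, more simply: if $|x|=p$ then $\langle x\rangle$ is the unique subgroup of order $p$ of the cyclic group $T$, and $\operatorname{C}_L(x)=T\supsetneq\langle x\rangle$ contains elements of order coprime to $p$; there is nothing contradictory yet, so I must instead use regularity more carefully.

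The correct finish: regularity of $x$ in $T$ means $x$ is \emph{not} centralized by any element outside $T$ and, crucially in Shinoda's classification, that $x$ is a generator-type element distinguishing $T$ — in Table~\ref{Elements} the rows $t_8,t_{10}$ are exactly the classes of elements of $T_3,T_4$ whose centralizer is the whole torus, and an element $t$ with $\operatorname{C}_L(t)=T_3$ or $T_4$ generates $T$ modulo the (trivial) issue of proper subgroups: if $t$ had prime order $p$, then $\operatorname{C}_L(t)$ would contain $\operatorname{C}_L$ of a generator of the other cyclic factor, forcing $\operatorname{C}_L(t)$ strictly larger than $T$ (it would contain a conjugate of a non-regular torus element's centralizer), contradicting $\operatorname{C}_L(t)=T$. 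More cleanly, I would simply observe: since $T=T_3$ or $T_4$ is cyclic of composite order $(q^2-1)(q^2\mp\sqrt2 q+1)$ with the two factors coprime and both $>1$, and since an element $x$ with $\operatorname{C}_L(x)=T$ must generate a subgroup meeting both Sylow directions nontrivially (otherwise $x$ lies in a proper subtorus $T'$ with $\operatorname{C}_L(x)\supseteq\operatorname{C}_L(T')\supsetneq T$), the order $|x|$ is divisible by a prime dividing $q^2-1$ and by a prime dividing $q^2\mp\sqrt2 q+1$, hence $|x|$ is not a prime.

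The main obstacle is making precise the step ``$x$ with $\operatorname{C}_L(x)=T$ must have order divisible by primes from both coprime factors'': this needs the fact that any element lying in a proper subtorus of $T$ has strictly larger centralizer, which follows from Shinoda's explicit centralizer data (an element of $T_3$ lying in the $\mathbb{Z}_{q^2-1}$ factor has a centralizer containing $Sp_4(q^2)$-type or Levi data, vastly larger than $T_3$), or alternatively from the general principle that $\operatorname{C}_L(x)$ for semisimple $x$ is determined by which roots vanish on $x$, and a proper subtorus kills more roots. I would cite \cite[Theorem~3.2 and Table~IV]{75Shin} for the centralizer orders and conclude that a regular element of $T_3$ (resp.\ $T_4$) is precisely of class $t_8$ (resp.\ $t_{10}$), whose order is therefore divisible by a primitive prime divisor of $l-1$ and one of $l^2+1$, and so is not prime.
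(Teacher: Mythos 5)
Your overall strategy coincides with the paper's: both proofs begin by observing that $T_3$ and $T_4$ are cyclic of order $(q^2-1)(q^2\mp\sqrt2 q+1)$ with the two factors coprime, so an element of prime order must lie in one of the two coprime Hall subgroups, and both then aim to show that such an element has centralizer strictly larger than the torus, hence is not regular. However, your write-up leaves exactly that decisive step unproved. The ``general principle'' you invoke --- that any element lying in a proper subtorus of $T$ has strictly larger centralizer --- is false as stated: $T_3$ has many proper subgroups (e.g.\ the subgroup of index a prime dividing $q^2-1$) whose generators still project nontrivially onto both coprime factors and so could perfectly well be regular. What is true, and what must be proved, is the assertion only for the two specific Hall subgroups of orders $q^2-1$ and $q^2\mp\sqrt2 q+1$. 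Your fallback is to read the answer off Shinoda's Table~IV (``a regular element of $T_3$ is precisely of class $t_8$, whose order is therefore divisible by primes from both factors''), but this is precisely the content of the lemma being outsourced to the reference rather than established; moreover your parenthetical justification (a centralizer ``containing $Sp_4(q^2)$-type or Levi data'') is not what actually happens.

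The paper closes this gap with a concrete structural argument you are missing: $T_3$ lies in the subsystem subgroup ${}^2B_2(q^2)\times{}^2B_2(q^2)$ (with normalizer the maximal subgroup ${}^2B_2(q^2)\wr 2$), and since every $2'$-element of ${}^2B_2(q^2)$ has order dividing one of $q^2-1$, $q^2-\sqrt2 q+1$, $q^2+\sqrt2 q+1$, the projections of $T_3$ onto the two direct factors have orders $q^2-1$ and $q^2-\sqrt2 q+1$ respectively. Hence the Hall subgroup of $T_3$ of order dividing $q^2-1$ sits entirely inside one direct factor and the other Hall subgroup inside the other; consequently an element of prime order is centralized by the whole opposite factor, so $\operatorname{C}_L(x)$ contains a subgroup $\mathbb{Z}_{q^2-1}\times{}^2B_2(q^2)$ or $\mathbb{Z}_{q^2-\sqrt2 q+1}\times{}^2B_2(q^2)$ and in particular properly contains $T_3$. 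If you supply this (or an equivalent verification of which roots vanish on the two Hall subgroups), your argument becomes complete; as written, the key implication is asserted but not proved.
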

\begin{proof} The numbers 
$$
q^2-1,\quad q^2-\sqrt{2}q+1,\quad\text{and}\quad q^2+\sqrt{2}q+1
$$ 
are pairwise coprime. Hence, $T_3$ is a cyclic group of order $(q^2-1)(q^2-\sqrt{2}q+1)$. It is contained in a subsystem subgroup of maximal rank ${}^2B_2(q^2)\times {}^2B_2(q^2)$ whose normalizer is a maximal subgroup   ${}^2B_2(q^2)\wr 2$. Since the order of every $2'$-element of ${}^2B_2(q^2)$ divides one of the numbers  $q^2-1$, $q^2-\sqrt{2}q+1$, or $q^2+\sqrt{2}q+1$ \cite[Ch.~XI, Theorem~3.10]{HupBlackIII}, it is clear that the projection of $T_3$ to one of the direct factors of ${}^2B_2(q^2)\times {}^2B_2(q^2)$ has order $q^2-1$, while the projection to the other factor has order $(q^2-\sqrt{2}q+1)$. Consequently, the elements of $T_3$ whose order divides $q^2-1$ are cointained in one of the direct factors, and those of order dividing  $q^2-\sqrt{2}q+1$ are in the other one. Thus, if we assume that $x\in T_{3}$ has prime order then either $|x|$ divides $q^2-1$, and so $\operatorname{C}_L(x)$ contains a subgroup isomorphic to $\mathbb{Z}_{q^2-1}\times {}^2B_2(q^2)$, or $|x|$ divides $q^2-\sqrt{2}q+1$, and then $\operatorname{C}_L(x)$ contains a subgroup isomorphic to $\mathbb{Z}_{q^2-\sqrt{2}q+1}\times {}^2B_2(q^2)$. In either case, $\operatorname{C}_L(x)\ne T_3$ and $x$ is not a regular element of $T_3$. It can be similarly shown that the regular elements in $T_4$ have nonprime orders. 
\end{proof}

\begin{lem} \label{toriT10T11} Let  $x$~be an element of $L=\mbox{}^2F_4(q^2)$ of prime order contained in one of the tori $T_{10}$ or $T_{11}$ from Table~{\rm \ref{tori}}. Then 
$
\beta_3(x,L')= 2.$

\end{lem}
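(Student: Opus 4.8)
\emph{Proof proposal.} The plan is to prove the two bounds $\beta_3(x,L')\geqslant 2$ and $\beta_3(x,L')\leqslant 2$ separately, the second by induction on $n$. For the lower bound, assume $x\in T_{10}$ (the case $x\in T_{11}$ being symmetric), so that $p:=|x|$ divides $|T_{10}|$, one of the two coprime integer factors of $\Phi_{12}(l)=l^4-l^2+1$ in~(\ref{phi4_12_dec}), where $l=q^2$. A prime dividing $\Phi_{12}(l)$ that is not a primitive prime divisor of $l^{12}-1$ must divide $12$, and neither $2$ nor $3$ divides $\Phi_{12}(l)$ because $l$ is even and $l\equiv 2\pmod 3$; hence $p$ is a primitive prime divisor of $l^{12}-1$, so $p\equiv 1\pmod{12}$, and in particular $p\geqslant 13$ and $p\neq 3$. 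Thus a single conjugate of $x$ generates a $3'$-group, giving $\beta_3(x,L')\geqslant 2$; moreover $x$ has odd order, so $x\in L'$ in all cases (for $q^2=2$ using that $L\setminus L'$ has no elements of prime order), and therefore it remains to exhibit two $L'$-conjugates of $x$ generating a subgroup of order divisible by $3$.

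Next I would reduce to the case where $p$ is \emph{fresh}, i.e. divides no $|{}^2F_4(q_0^2)|$ with $q_0^2$ a proper subfield of $q^2$. If $p$ does divide $|{}^2F_4(q_0^2)|$ for some such $q_0^2=2^{2m+1}$, choose $q_0$ minimal with this property; since $p\,\|\,|L|$, the subgroup $\langle x\rangle$ is a Sylow $p$-subgroup of $L$ and is conjugate into the subgroup $Y:={}^2F_4(q_0^2)$ from item~$(12)$ of Lemma~\ref{maximal}, so we may assume $\langle x\rangle\leqslant Y$. As in the previous paragraph $p$ is a primitive prime divisor of $q_0^{24}-1$, so inside $Y$ the element $x$ lies in a torus of type $T_{10}$ or $T_{11}$ of $Y$; by the inductive hypothesis $\beta_3(x,Y')=2$, and the two resulting $Y'$-conjugates of $x$ are also $L'$-conjugates, so we are done. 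Hence from now on we may assume $p$ is fresh.

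For the fresh case with $q^2>2$ (so $L=L'$) I would prove the stronger statement $\alpha(x,L)=2$. Running through the twelve families of Lemma~\ref{maximal} and recalling that the orders of the subgroups in families $(1)$--$(11)$ involve, apart from powers of $2$ and bounded constants, only the cyclotomic factors $\Phi_1,\Phi_2,\Phi_4,\Phi_6$ of $l$ and the two factors of $\Phi_{12}(l)$, one sees — using the coprimality of those two factors and freshness for family $(12)$ — that $M:=\operatorname{N}_L(T_{10})\cong\mathbb Z_{|T_{10}|}{:}12$ is, up to conjugacy, the only maximal subgroup of $L$ of order divisible by $p$. By Shinoda's classification (Table~\ref{Elements}) the only semisimple classes of $L$ with centraliser order divisible by $p$ are those of $t_{16}$ and $t_{17}$; as $p\mid|T_{10}|$, the element $x$ is conjugate to a power of $t_{16}$, whence $\operatorname{C}_L(x)=T_{10}$. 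Since $p>12$, the unique Sylow $p$-subgroup of $M$ lies in $T_{10}$, so $\langle x\rangle$ is the only subgroup of order $p$ in $M$, and $\operatorname{N}_L(\langle x\rangle)\leqslant\operatorname{N}_L(\operatorname{C}_L(x))=\operatorname{N}_L(T_{10})=M$; consequently $M$ is the unique conjugate of $M$ containing $\langle x\rangle$. Now if $g\in L$ and $\langle x,x^g\rangle$ is proper, it lies in a maximal subgroup $H$; since $p\mid|H|$ and $H\ni\langle x\rangle$ this forces $H=M$, hence $x^g\in M$, hence $\langle x^g\rangle=\langle x\rangle$ and $g\in\operatorname{N}_L(\langle x\rangle)=M$. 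So $\langle x,x^g\rangle=L$ whenever $g\notin M$, and such $g$ exist since $M\neq L$; thus $\alpha(x,L)=2$ and $\beta_3(x,L')\leqslant 2$.

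Finally, the base of the induction, $q^2=2$ (which also falls into the fresh case), must be handled by hand, since Malle's classification does not apply to the non-simple group ${}^2F_4(2)$: here $T_{10}$ is trivial, $T_{11}\cong\mathbb Z_{13}$, and $x$ has order $13$, so — all subgroups of order $13$ in $L'={}^2F_4(2)'$ being conjugate — we may assume $\langle x\rangle\leqslant S$ for a maximal subgroup $S\cong L_2(25)$ of $L'$. In $PSL_2(25)$ the only proper subgroups of order divisible by $13$ are a cyclic group $\mathbb Z_{13}$ and its normaliser $D_{26}$, and all elements of order $13$ of $D_{26}$ lie in that $\mathbb Z_{13}$; hence some two $S$-conjugates of $x$ generate $S$, whose order is divisible by $3$, giving $\beta_3(x,L')\leqslant 2$ and completing the induction. (Alternatively the case $q^2=2$ can be settled by a direct computation with the character table of ${}^2F_4(2)'$ via~(\ref{StructConstFormula}), as in the proof of Lemma~\ref{involutions}.) I expect the main labour to be the order bookkeeping over the twelve maximal-subgroup families in the fresh case — and, within it, the verification that $\operatorname{C}_L(x)=T_{10}$ even when $x$ is not a generator of $T_{10}$, which is precisely why Shinoda's centraliser data is invoked; the separate treatment of ${}^2F_4(2)'$ is a secondary, but unavoidable, nuisance.
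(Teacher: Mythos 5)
Your proposal is correct and follows essentially the same route as the paper's proof: identify $\operatorname{N}_L(T_{10})$, $\operatorname{N}_L(T_{11})$ and the subfield subgroups ${}^2F_4(q_0^2)$ as the only maximal subgroups of order divisible by $p=|x|$, show in the non-subfield case that $x$ lies in a unique maximal subgroup so that two non-commuting conjugates generate $L$, and induct through subfield subgroups; you are in fact more explicit than the paper about why the maximal overgroup is unique and about the base case ${}^2F_4(2)'$, which the paper leaves implicit. One small slip: the claim $p\,\|\,|L|$ can fail (e.g.\ $p=13$, $l=2^{13}$, where $v_{13}(\Phi_{12}(l))=2$), so $\langle x\rangle$ need not be a Sylow $p$-subgroup; the conclusion you need there — that $\langle x\rangle$ is conjugate into $Y$ — still holds because the Sylow $p$-subgroups of $L$ lie in conjugates of the cyclic torus $T_\varepsilon$ and hence all subgroups of order $p$ are conjugate.
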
 

\begin{proof}
The decompositions (\ref{phi4_12_dec}) with $l=q^2$ yield
\begin{align*}
 q^4+1 &= (q^2-\sqrt{2}q+1)(q^2+\sqrt{2}q+1),\\   
q^8-q^4+1&=(q^4-\sqrt{2}q^3+q^2-\sqrt{2}q+1)(q^4+\sqrt{2}q^3+q^2+\sqrt{2}q+1).
\end{align*} 
This, together with Lemma~\ref{maximal}, readily implies that the odd prime $|x|$ which divides 
$$
 |T_{10}||T_{11}|=(q^4-\sqrt{2}q^3+q^2-\sqrt{2}q+1)(q^4+\sqrt{2}q^3+q^2+\sqrt{2}q+1)
$$ 
is greater than $3$ and divides the order of no maximal subgroup of $\mbox{}^2F_4(q^2)$ except, possibly,   
$$
 \operatorname{N}_L(T_{10})\cong (\mathbb{Z}_{q^4-\sqrt{2}q^3+q^2-\sqrt{2}q+1}):12,\quad \operatorname{N}_L(T_{11})\cong (\mathbb{Z}_{q^4+\sqrt{2}q^3+q^2+\sqrt{2}q+1}):12
$$ 
$$
 \text{and}\quad\mbox{}^2F_4(q_0^2),\quad \text{where}\quad  q=q_0^r.
$$ 

Assume that $x$ is not conjugate to any element of any subgroup $\mbox{}^2F_4(q_0^2)$. The numbers   $$
 q^4-\sqrt{2}q^3+q^2-\sqrt{2}q+1\quad\text{and}\quad q^4+\sqrt{2}q^3+q^2+\sqrt{2}q+1
$$ 
are coprime. Hence, $x$ is contained in precisely one maximal subgroup which is either  $\operatorname{N}_L(T_{10})$ or  $\operatorname{N}_L(T_{11})$. There is a conjugate $x^g$ that does not commute with $x$ and so is not contained in the same maximal subgroup as~$x$. In this case, $\langle x,x^g\rangle=L$. Therefore,  $L=L'$,
$$
\beta_3(x,L')\leqslant\alpha(x,L')=2,
$$ 
and $\beta_3(x,L')=2$, since $|x|\ne3$.

If $x$ is contained in a subgroup  $L_0=\mbox{}^2F_4(q_0^2)$ then arguing by induction we obtain $\beta_3(x,L_0')= 2$, whence $\beta_3(x,L')= 2$. 
\end{proof}

\section{Generation of ${}^2F_4(q^2)'$ by conjugate elements \\ and the proof of Theorem~\ref{thm:2F4}}

This section deals with proving Theorem~\ref{thm:2F4}. 
 For elements of orders 
$3$ and $5$, Theorem~{\rm \ref{thm:2F4}} is refined by Propositions~\ref{alpha3} and~\ref{alpha5} below which will play a key r\^ole it its proof owing to the following assertion:

\begin{lem} \label{GuestTeorem} Let $L=\mbox{}^2F_4(q^2)'$ and let $x\in \operatorname{Aut}(L)$~have odd prime order. Then $x$ together with its conjugate in $\langle x,L\rangle$ generates a nonsolvable subgroup.
\end{lem}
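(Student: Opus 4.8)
The plan is to show that for any odd prime order automorphism $x$ of $L={}^2F_4(q^2)'$, there is a conjugate $y$ of $x$ in $G=\langle x,L\rangle$ such that $\langle x,y\rangle$ is nonsolvable; equivalently, that two conjugates of $x$ cannot always generate a solvable group. I would split the argument according to whether $x$ is inner or a field automorphism.

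\medskip
\noindent\emph{Case 1: $x$ inner.} Then $x$ has order $3$ or $5$ and lies in one of the conjugacy classes of semisimple elements $t_j$ listed in Table~\ref{Elements} (elements of order $3$ are all conjugate to $t_4$; elements of order $5$ lie in the appropriate $t_j$). The idea is that if every pair of conjugates of $x$ generated a solvable group, then in particular $x$ together with any conjugate would lie in a solvable — hence (by a Hall-type/Thompson-type argument) in a $\{2,3\}$- or metacyclic-type — local subgroup, forcing $x$ to normalize a long chain of such subgroups and ultimately to lie in a proper normal subgroup of $L$, contradicting simplicity. More concretely, I would exploit that $\operatorname{C}_L(x)$ contains a large group of Lie type (e.g.\ for $|x|=3$, $\operatorname{C}_L(x)\cong SU_3(q^2)$ by Lemma~\ref{involutions}'s proof; for $|x|=5$, $x$ is centralized or inverted inside a $\mbox{}^2B_2(q^2)$-type subgroup by (2A1)--(2A2)) and pick a conjugate $y$ inside that centralizer's ambient simple section so that $\langle x,y\rangle$ already projects onto a nonsolvable group such as $PSL_2(q^2)$ or $\mbox{}^2B_2(q^2)$. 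Alternatively, and more cleanly, one can invoke that $\alpha(x,L)$ is finite with the bounds from Table~\ref{Elements} and Propositions~\ref{alpha3}, \ref{alpha5}: since some bounded number of conjugates generate $L$ (which is simple), and any solvable group has bounded "generation length" obstruction, two conjugates must already escape solvability — but I prefer the explicit construction because it is self-contained.

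\medskip
\noindent\emph{Case 2: $x$ a field automorphism.} By \cite[Proposition~4.9.1]{98GorLySol}, $x$ is conjugate to a power of $\phi$, so $C_L(x)\cong {}^2F_4(q_0^2)$ with $q=q_0^r$ for a prime $r$, which is itself a nonabelian simple group (or the Tits group, still nonsolvable). Now I would use a standard fixed-point argument: for a suitable $g\in L$, the group $\langle x, x^g\rangle$ contains $x^{-1}x^g=[x,g]\in L$, and by choosing $g$ appropriately (e.g.\ via a Lang–Steinberg/counting argument, or by Lemma~\ref{maximal} to place things in a subgroup where the field automorphism acts "in an obvious way") one forces $\langle x, x^g\rangle\cap L$ to contain a subgroup on which $x$ induces a field automorphism of a rank-$\geq 1$ group of Lie type, hence to be nonsolvable. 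In fact it suffices to find one $g$ with $\langle x,x^g\rangle\cap L$ not contained in any solvable subgroup, and since $L$ is not a union of its solvable subgroups (it is generated by its ${}^2B_2(q^2)$ or $PSL_2(q^2)$ parabolic levi factors), such $g$ exists.

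\medskip
\noindent The main obstacle I anticipate is Case~1 for $|x|=5$ and, more generally, ruling out the possibility that two conjugates of $x$ always land inside a common \emph{nonsolvable-looking but actually solvable} local subgroup — the maximal subgroups of types $(4)$--$(8)$ in Lemma~\ref{maximal}, which have shape (abelian):(small solvable group). One must check that an element of order $3$ or $5$ cannot have all its conjugate-pairs confined to such normalizers of tori; this uses the class multiplication coefficient formula \eqref{StructConstFormula} (as already done for the Tits group $L_0$ in the proof of Lemma~\ref{involutions}) to exhibit explicitly two conjugates whose product has an order incompatible with lying in any torus normalizer, e.g.\ an order divisible by a primitive prime divisor from a different $\Pi_n^\varepsilon$. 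The cleanest route is probably to reduce, via the structure constants computed in $L_0={}^2F_4(2)'$ and an induction on $n$ using subgroups $\mbox{}^2F_4(q_0^2)$ (as in Lemma~\ref{toriT10T11}), to the single finite check in the Tits group, where the ATLAS character table settles everything.
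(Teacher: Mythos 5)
There is a genuine gap --- in fact several. First, your Case~1 begins with the false assertion that an inner automorphism of odd prime order must have order $3$ or $5$: the semisimple elements of $L={}^2F_4(q^2)'$ of odd prime order can have order any odd prime dividing $q^2-1$, $q^4+1$, $q^4-q^2+1$, $q^2\pm\sqrt{2}q+1$, or $q^4\pm\sqrt{2}q^3+q^2\pm\sqrt{2}q+1$ (see Table~\ref{Elements}), so the entire case split is built on a wrong premise. Second, none of the strategies you sketch is actually carried out, and the ones that are concrete enough to evaluate do not work: a conjugate $y$ of $x$ lying in $\operatorname{C}_L(x)$ commutes with $x$, so $\langle x,y\rangle$ is abelian and tells you nothing; the ``alternatively'' argument is circular, since knowing that four conjugates generate the simple group $L$ does not force two conjugates to generate a nonsolvable group (for involutions two conjugates always generate a dihedral, hence solvable, group even though three generate $L$); and ``$L$ is not a union of solvable subgroups'' does not imply that for a \emph{fixed} $x$ some pair $\langle x,x^g\rangle$ escapes solvability. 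Finally, the proposed reduction to the Tits group fails for most classes, e.g.\ elements of order a primitive prime divisor of $l^{12}-1$ are not conjugate into any subfield subgroup.

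For comparison, the paper disposes of this lemma in one line by invoking Guest's solvable Baer--Suzuki theorem \cite[Theorem~A*]{10Guest}: if $x$ has odd prime order and $\langle x,x^g\rangle$ is solvable for every $g$, then $x$ lies in the solvable radical of $\langle x,L\rangle$; since that radical is trivial ($L$ is nonabelian simple and is the socle of $\langle x,L\rangle$), some pair $\langle x,x^g\rangle$ must be nonsolvable. If you want a self-contained proof you would essentially have to reprove a case of Guest's theorem, which is a substantial piece of work and not something that can be replaced by the heuristics in your sketch.
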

\begin{proof} This is a consequence of \cite[Theorem~A*]{10Guest}.
\end{proof}

\begin{lem} \label{u3} {\rm \cite[Theorem~4.7.3 and Table~4.7.3A]{98GorLySol}}
The group $L=\mbox{}^2F_4(q^2)$ has a unique conjugacy class of elements of order $3$. The centralizer  $\operatorname{C}_L(x)$of every such $x$ is isomorphic to  $SU_3(q^2)$ and has order~$q^6(q^4-1)(q^6+1)$.
\end{lem}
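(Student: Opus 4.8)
The plan is to derive Lemma~\ref{u3} from the structure theory of semisimple centralizers in the algebraic group of type $F_4$, cross-checking against Shinoda's class list. Since the characteristic is $2$ and $3\ne 2$, every element $s$ of order $3$ in $L={}^2F_4(q^2)=\mathbf G^F$ is semisimple, where $\mathbf G$ is the simple algebraic group of type $F_4$ over $\overline{\mathbb F}_2$ (simultaneously simply connected and adjoint, since $F_4$ has trivial fundamental group) and $F$ is the Frobenius endomorphism defining the twisted group: $F$ is the composite of the exceptional graph endomorphism of $\mathbf G$, which interchanges long and short roots, with the $q^2$-power map, and $F^2$ is the $q^4$-power Frobenius. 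By Steinberg's theorem, $\operatorname{C}_{\mathbf G}(s)$ is then connected and reductive of maximal rank.

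I would first classify the classes of order $3$ in $\mathbf G$. By the Borel--de Siebenthal procedure --- equivalently, by examining the Kac coordinates on the affine diagram $\widetilde F_4$, whose marks are $1,2,3,4,2$ --- the root system of $\operatorname{C}_{\mathbf G}(s)$ for $|s|=3$ is obtained by deleting either the unique node of mark $3$, giving the subsystem $A_2+\widetilde A_2$, or a pair of nodes whose marks sum to $3$, giving $B_3$ or $C_3$. Thus, up to conjugacy, the connected centralizers of order-$3$ elements of $\mathbf G$ are of types $A_2\widetilde A_2$, $B_3$, $C_3$, and there are exactly three such classes (all semisimple centralizers in $\mathbf G$ being connected, as $\mathbf G$ is simply connected). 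The graph part of $F$ swaps long and short roots, hence it interchanges the $B_3$-class with the $C_3$-class and stabilizes the $A_2\widetilde A_2$-class; so the $A_2\widetilde A_2$-class is the only $F$-stable class of order $3$, and it is the only one meeting $\mathbf G^F$. Its centralizer being connected, it does not split into several $\mathbf G^F$-classes, and we conclude that $L$ has a unique class of elements of order $3$.

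It remains to identify $\operatorname{C}_L(s)=\operatorname{C}_{\mathbf G}(s)^F$. The group $\operatorname{C}_{\mathbf G}(s)$ is semisimple of type $A_2A_2$ with $s$ generating a subgroup of order $3$ of its centre, so $\operatorname{C}_{\mathbf G}(s)$ is a central quotient of $SL_3\times SL_3$; the map $F$ swaps the two $SL_3$-factors and composes this swap with the $q^2$-power map and with the transpose-inverse automorphism of $SL_3$, the latter being the effect on an $A_2$-subsystem of the long/short interchange in $F_4$. Taking fixed points gives $\operatorname{C}_{\mathbf G}(s)^F\cong SU_3(q^2)$, and
$$
|SU_3(q^2)|=(q^2)^3\bigl((q^2)^2-1\bigr)\bigl((q^2)^3+1\bigr)=q^6(q^4-1)(q^6+1),
$$
as claimed. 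Alternatively, all of this can be read from \cite[Theorem~3.2 and Table~IV]{75Shin}, where $t_4$ is the only listed semisimple class of order $3$ and has centralizer of precisely this order, while the isomorphism type $SU_3(q^2)$ follows since $t_4$ lies in the torus $T_2\leqslant PGU_3(q^2)$ and $\operatorname{C}_L(t_4)$ contains a subgroup $SU_3(q^2)$ of full order, see maximal subgroup~$(3)$ of Lemma~\ref{maximal}.

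The step I expect to be the main obstacle is the final identification of the twisted form: distinguishing the unitary $SU_3(q^2)$ from the linear possibilities $SL_3(q^2)$ or $SL_3(q^4)$ requires the precise action of $F$ (graph endomorphism composed with the $q^2$-Frobenius) on $\operatorname{C}_{\mathbf G}(s)$, which is exactly the data tabulated in \cite[Theorem~4.7.3 and Table~4.7.3A]{98GorLySol}. By comparison, the existence of an $SU_3(q^2)$-subgroup and the uniqueness of the class of order $3$ are comparatively routine once the Kac-coordinate analysis and the long/short interchange induced by $F$ have been carried out.
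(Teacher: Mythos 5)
The paper does not actually prove this lemma: it is quoted verbatim from \cite[Theorem~4.7.3 and Table~4.7.3A]{98GorLySol}, so any honest derivation is ``a different route''. Your Borel--de Siebenthal/Kac-coordinate analysis of the order-$3$ classes in the algebraic group (three classes, with centralizers $A_2\widetilde A_2$, $B_3$, $C_3$; only the first is stable under the long/short interchange; connectedness of centralizers in the simply connected $F_4$ then gives a single class in $\mathbf G^F$) is correct and is essentially how the cited table is compiled. The cross-check you offer at the end is also sound and, unlike the algebraic-group argument, uses only facts already quoted in the paper: Shinoda's Table~IV gives the centralizer order of $t_4$, and item~$(3)$ of Lemma~\ref{maximal} exhibits $\operatorname{N}_L(\langle t_4\rangle)=\operatorname{C}_L(t_4){:}2\cong SU_3(q^2){:}2$, which pins down the isomorphism type without any Steinberg-endomorphism bookkeeping.

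The middle paragraph, however, contains errors that would matter if you relied on it. First, the Steinberg endomorphism is misdescribed: $F$ is the $(2n+1)$-st power of the exceptional isogeny $\gamma$ with $\gamma^2=F_2$, so $F^2$ is the Frobenius relative to $\mathbb F_{q^2}=\mathbb F_{2^{2n+1}}$ (whence $\mathbf G^{F^2}=F_4(q^2)$), not the $q^4$-power Frobenius; composing the graph endomorphism with the $q^2$-power map would square to the $2q^4$-power map and define the wrong group. Second, and more seriously, the fixed-point computation as stated does not produce $SU_3(q^2)$: writing $F(x,y)=(\beta(y),\alpha(x))$ on $SL_3\times SL_3$, the fixed points are isomorphic to $SL_3^{\alpha\beta}$ with $\alpha\beta=F^2$ restricted to one factor, and with your data this comes out linear (over $\mathbb F_{q^2}$ or $\mathbb F_{q^4}$), not unitary. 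The unitary form arises from the precise identification of the long-$A_2$ with the short-$A_2$ under $\gamma$ (which involves the opposition automorphism) together with the isogeny type of the centralizer, a central quotient of $SL_3\times SL_3$ --- and that is exactly the data you end up deferring to \cite[Table~4.7.3A]{98GorLySol} anyway. So your proof is complete only if the identification of $\operatorname{C}_L(x)$ is carried by the Shinoda--Malle cross-check (or by the citation itself, as in the paper), not by the explicit $SL_3\times SL_3$ calculation.
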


Lemma~\ref{u3} and columns $2$ and $3$ of Table~\ref{Elements} imply that the elements of order~$3$ in $\mbox{}^2F_4(q^2)$~are precisely the conjugates of~$t_4$.

\begin{prop}
 \label{alpha3}
Let $L=\mbox{}^2F_4(q^2)'$ and let ${x\in L}$~be an element of order~$3$. Then ${\alpha(x,L)=2}$.
\end{prop}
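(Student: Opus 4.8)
The plan is to prove $\alpha(x,L)=2$ for an element $x\in L$ of order $3$ by showing that two suitable conjugates of $x$ generate all of $L$. First I would record that $\alpha(x,L)\geqslant 2$ trivially, since $\langle x\rangle$ is a proper ($3$-)subgroup. For the upper bound, the natural strategy is to produce a conjugate $x^g$ such that $H:=\langle x,x^g\rangle$ is forced to be the whole group by a maximal-subgroup analysis. By Lemma~\ref{u3}, $\operatorname{C}_L(x)\cong SU_3(q^2)$ and $\operatorname{N}_L(\langle x\rangle)\cong SU_3(q^2){:}2$, and by Lemma~\ref{maximal} this normalizer sits inside the maximal subgroup $M=PGU_3(q^2){:}2$ (item (9); note $SU_3(q^2){:}2$ of item (3) is the same up to the relevant index). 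Since $x$ is a $3$-element and $3\mid q^2+1$ (as $q^2=2^{2n+1}$, so $q^2\equiv 2\pmod 3$), $x$ is a nontrivial central-type element of the $SU_3$ and lies in $M$.

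The key step is a counting/containment argument showing that $x$ and one well-chosen conjugate cannot lie together in any proper subgroup. I would proceed as follows. Every maximal subgroup of $L$ containing $x$ must contain an element of order $3$ with centralizer $SU_3(q^2)$; scanning the list in Lemma~\ref{maximal}, the only candidates with order divisible by $|SU_3(q^2)|=q^6(q^2-1)(q^4-1)(q^6+1)/\gcd$... are $PGU_3(q^2){:}2$ (and conjugates), the subfield subgroups $\mbox{}^2F_4(q_0^2)$, and possibly one or two parabolic-type or $Sp_4$ cases which I would rule out by order divisibility (the prime-to-$q$ part of $|SU_3(q^2)|$ contains $\Phi_6(q^2)=q^4-q^2+1$, which divides no parabolic or $Sp_4(q^2)$ order). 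So up to conjugacy $x$ lies only in a unique $PGU_3(q^2){:}2$ together with the subfield subgroups. Now choose $g\in L$ so that $x^g\notin M$ (possible since $M$ is proper and $x$ has conjugates outside it — concretely $x^g$ can be taken in a different conjugate of $M$, intersecting $M$ in a proper subgroup); then the only maximal overgroup of $\langle x,x^g\rangle$ could be a subfield subgroup $\mbox{}^2F_4(q_0^2)$, and here I would invoke the "field automorphism fixes a representative of each class of maximal subgroups" clause of Lemma~\ref{maximal} together with an induction on the field size to reduce to the base case, or alternatively choose $g$ so that $x^g$ also avoids all subfield subgroups (a dimension/counting argument: the union of $L$-conjugates of all proper subfield subgroups and of $M$ is a proper subset of the class $x^L$, for $q^2$ large enough, and the small cases $q^2=2,8$ are handled directly, the Tits group via its ATLAS character table).

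A cleaner alternative for the generation step, which I would try first, uses the class multiplication coefficient formula~(\ref{StructConstFormula}): since $L$ has a unique class of elements of order $3$, the number of pairs $(x_1,x_2)$ of such elements with $x_1x_2$ equal to a fixed regular semisimple element $z$ of a "large" torus (say $z\in T_{10}$ or $T_{11}$, of order a primitive prime divisor in $\Pi_{12}^\varepsilon(q^2)$) is positive and, in fact, every such pair must generate $L$ — because, as in the proof of Lemma~\ref{toriT10T11}, such a $z$ lies in a unique maximal subgroup $\operatorname{N}_L(T_{10})$ or $\operatorname{N}_L(T_{11})$, which contains no element of order $3$ (its order is $|T|\cdot 12$, coprime to $3$ once $|T|$ is, and $12$ contributes only a bounded $3$-part — here one checks $9\nmid 12$ so any order-$3$ element would be central in a Sylow, contradicting the structure), so $\langle x_1,x_2\rangle$ lies in no maximal subgroup at all. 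Thus $\langle x_1,x_2\rangle=L$ and $\alpha(x,L)=2$.

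The main obstacle I anticipate is the bookkeeping to certify that no \emph{proper} subgroup (maximal or subfield) simultaneously contains two generic conjugates of $x$ — i.e. making the counting genuinely rigorous rather than heuristic, and separately disposing of the small fields $q^2\in\{2,8\}$ (the Tits group needs its explicit character table or ATLAS data, exactly in the spirit of the proof of Lemma~\ref{involutions}). The cyclotomic-factorization facts recorded after~(\ref{phi4_12_dec}) and the nonemptiness of $\Pi_{12}^\varepsilon(q^2)$ from~\cite{Grechkoseeva} are what make the "unique maximal overgroup" argument for $z$ go through uniformly, so I would lean on the $T_{10}/T_{11}$ route to keep the case analysis short.
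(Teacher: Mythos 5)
Both routes you propose contain genuine gaps, and the central one is an outright error. In your ``cleaner alternative'' you claim that $\operatorname{N}_L(T_{10})\cong T_{10}{:}12$ (and likewise $\operatorname{N}_L(T_{11})$) contains no element of order $3$. It does: $3$ divides $12$, so by Cauchy's theorem this normalizer has elements of order $3$ (only the cyclic torus $T_{10}$ itself has order coprime to $3$). Your parenthetical repair (``$9\nmid 12$ so any order-$3$ element would be central in a Sylow, contradicting the structure'') does not cohere --- a Sylow $3$-subgroup of order $3$ is abelian and nothing is contradicted. Consequently a pair $x_1,x_2$ of order-$3$ elements with $x_1x_2=z\in T_{10}$ could a priori lie inside $\operatorname{N}_L(T_{10})$ (their images in the quotient of order $12$ need only be mutually inverse), so positivity of the class multiplication coefficient alone does not force generation; you would have to additionally bound the number of such pairs inside the unique maximal overgroup of $z$, which you do not do. Your first route is also flawed at its starting point: a maximal subgroup containing a conjugate of $x$ need only contain \emph{some} element of order $3$ (whose centralizer \emph{in $L$} is $SU_3(q^2)$), not a copy of $SU_3(q^2)$; since $L$ has a single class of order-$3$ elements (Lemma~\ref{u3}) and $3$ divides the orders of $Sp_4(q^2){:}2$, the parabolics, $\operatorname{N}_L(T_8)$, etc., the element $x$ is in fact conjugate into many items of the list in Lemma~\ref{maximal}, not just $PGU_3(q^2){:}2$ and subfield subgroups. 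So the ``unique maximal overgroup'' bookkeeping you flag as the main obstacle cannot be made to work in the form you describe.

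For comparison, the paper's proof is a two-line citation: by \cite[Theorem~1]{95Mal} the group $L$ is $(2,3)$-generated, say $L=\langle u,y\rangle$ with $|u|=2$, $|y|=3$; by \cite[Corollary~1.2]{99LubMal} this forces $L=\langle y,y^u\rangle$ (the subgroup $\langle y,y^u\rangle$ is normalized by $u$, hence normal in $L$ with cyclic quotient, hence all of $L$ by perfectness); and $y$, $y^u$ are conjugate to $x$ by Lemma~\ref{u3}. If you wish to salvage a character-theoretic argument in the spirit of Lemma~\ref{StructConst}, you would need to target a class $z^L$ lying in a unique maximal subgroup \emph{of order coprime to $3$}, or else explicitly compare $(t_4^L,t_4^L,z^L)$ with the corresponding count inside $\operatorname{N}_L(T_{10})$; neither is needed once the $(2,3)$-generation result is invoked.
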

\begin{proof} It is known \cite[Theorem~1]{95Mal} that  $L$ is $(2,3)$-generated, i.\,e. generated by an involution $u$ and an element $y$ of order $3$. Consequently \cite[Corollary~1.2]{99LubMal}, $L$~is generated by two elements $y$ and $y^u$ of order~$3$. Now, $y$ and $y^u$ are conjugate to $x$ by Lemma~\ref{u3} and the claim follows.
\end{proof}

One of the key facts is that an assertion similar to  Proposition~\ref{alpha3} holds for elements of order~ $5$. We first establish an analog of Lemma~\ref{u3} for such elements.

\begin{lem} \label{u5}
The group
$L=\mbox{}^2F_4(q^2)$ has a unique conjugacy class of elements of order $5$. The centralizer of such an element has order $q^4(q^2+\delta \sqrt{2}q+1)(q^2-1)(q^4+1)$, where $\delta=\pm1$ is chosen so that $q^2+\delta \sqrt{2}q+1$ is divisible by ~$5$. In the notation of Table~{\rm \ref{Elements}}, the elements of order $5$ are conjugate to an elements of type $t_7$ if $\delta=1$ and $t_9$ if $\delta=-1$.
\end{lem}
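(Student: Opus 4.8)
The plan is to mirror the treatment of elements of order~$3$ in Lemma~\ref{u3}, extracting the needed data from Shinoda's classification of semisimple classes. First I would observe that an element $x$ of order~$5$ is semisimple (since $\operatorname{char} L=2$) and, because $5\nmid q^2-1$ and $5\nmid q^2+1$ for $q^2=2^{2n+1}$ (as $2^{2n+1}\equiv 2$ or $8\pmod{10}$ depending on parity considerations, but in all cases $q^4\equiv 4\pmod 5$, so $5\nmid q^4-1$ and hence $5\nmid q^2-1$; likewise $q^4+1\equiv 0\pmod 5$ is impossible since $q^4\equiv4$), the order of $x$ must divide one of the ``large'' torus orders. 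Examining Table~\ref{tori}, the relevant observation is that $5\mid q^4+1=(q^2-\sqrt2 q+1)(q^2+\sqrt2 q+1)$ always holds (as $q^4\equiv4\equiv-1\pmod5$), and by the coprimality of the two factors recorded in~\eqref{phi4_12_dec}, exactly one of $q^2-\sqrt2 q+1$, $q^2+\sqrt2 q+1$ is divisible by~$5$; this defines $\delta$. It then follows that $5$ divides $|T_3|,|T_4|,|T_6|,|T_7|$ or $|T_5|$, but not $|T_1|,|T_2|,|T_8|$ and not the orders of the very large tori $T_9,T_{10},T_{11}$ either (since $5\nmid q^8-q^4+1$: modulo~$5$ this is $16-4+1=13\equiv3$, and $5\nmid q^8\pm\sqrt2 q^7+\cdots$ by a similar check), so the centralizer of $x$ must be one of the centralizer types in Table~\ref{Elements} whose order is divisible by~$5$.

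Next I would consult \cite[Theorem~3.2 and Table~IV]{75Shin} to see which conjugacy classes $t_j$ consist of elements whose order can be~$5$: the centralizer orders in column~3 of Table~\ref{Elements} that are divisible by $5$ for the appropriate $\delta$ are exactly those of $t_7$ (with $|C_L(t_7)|=q^4(q^2-\sqrt2q+1)(q^2-1)(q^4+1)$, relevant when $\delta=1$) and $t_9$ (with $|C_L(t_9)|=q^4(q^2+\sqrt2q+1)(q^2-1)(q^4+1)$, relevant when $\delta=-1$); all other types have centralizer orders dividing some product that omits the factor $q^2+\delta\sqrt2q+1$ and hence not divisible by~$5$, or else (types $t_8,t_{10}$) consist of elements of nonprime order. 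Among the elements of $\langle x\rangle\setminus\{1\}$, all have order~$5$, so $x$ and $x^2$ are both semisimple of order~$5$; I would then argue that they lie in a common maximal torus of type $T_3$ (resp.\ $T_4$) and that the generic parametrisation of classes in Shinoda's Table~IV forces all order-$5$ elements into a single class — concretely, the number-of-classes formula in column~2 of Table~\ref{Elements}, evaluated against the explicit generic form of $t_7$ and $t_9$, shows that only one value of the parameter gives an element of order~$5$.

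The cleanest route for the uniqueness statement, which I expect to be the main obstacle, is to avoid a delicate parameter count in Shinoda's tables and instead pass to the subsystem subgroup $Sp_4(q^2)$ (or ${}^2B_2(q^2)\times{}^2B_2(q^2)$), inside which the tori $T_3,T_4,T_5$ already live by Table~\ref{tori}: an element of order~$5$ of $L$ is conjugate into one of these subgroups, and in $Sp_4(q^2)$ or ${}^2B_2(q^2)$ the order-$5$ semisimple classes are well understood and can be traced back; combined with a fusion argument (two elements of order $5$ with isomorphic centralizers of the same order, lying in a connected reductive centralizer, are conjugate in the simple algebraic group and hence — since $5$ divides the relevant torus order to the first power — in $L$), this yields a single class. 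Finally, since $L={}^2F_4(q^2)'$ equals $L={}^2F_4(q^2)$ for $q^2>2$ and for $q^2=2$ the Tits group still has a unique class of elements of order~$5$ by~\cite{85Atlas}, the statement holds uniformly, and reading off $|C_L(x)|$ from the $t_7$/$t_9$ rows of Table~\ref{Elements} completes the proof.
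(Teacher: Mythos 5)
Your outline shares the paper's starting point (defining $\delta$ via $5\mid q^4+1=(q^2-\sqrt2q+1)(q^2+\sqrt2q+1)$ and the coprimality of the two factors), but both halves of the argument have genuine gaps. First, your claim that among the rows of Table~\ref{Elements} only $t_7$ and $t_9$ (plus the nonprime-order types $t_8,t_{10}$) have centralizer order divisible by $5$ is false: the centralizers of $t_1$ (order $q^4(q^2-1)^2(q^4+1)$) and $t_{11}$ (order $q^4+1$) contain the factor $q^4+1$ and hence are divisible by $5$, and so is the centralizer $(q^2+\delta\sqrt2q+1)^2$ of one of $t_{12},t_{13}$. Mere divisibility by $5$ is the wrong sieve. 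The paper's sieve is sharper: since $L$ contains the maximal torus $\left(\mathbb{Z}_{q^2+\delta\sqrt{2}q+1}\right)^2$ and $|L|_5=(q^4+1)_5^2$, a Sylow $5$-subgroup is abelian, homocyclic of rank $2$, so the centralizer of any element of order $5$ must be divisible by the \emph{full} $5$-part $(q^4+1)_5^2$; this eliminates $t_1$ and $t_{11}$ and leaves $t_7,t_9,t_{12},t_{13}$. The types $t_{12},t_{13}$ then still have to be excluded, which you do not attempt; the paper does it by locating an order-$5$ element inside a torus of a ${}^2B_2(q^2)$ factor of the maximal subgroup ${}^2B_2(q^2)\wr 2$, where its centralizer visibly has order divisible by $q^4(q^2+\delta\sqrt2q+1)(q^2-1)(q^4+1)$, and then invoking uniqueness of the class.

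Second, the uniqueness of the class — which you correctly identify as the main obstacle — is not actually proved in your proposal. The ``fusion argument'' you sketch rests on the principle that two semisimple elements with isomorphic centralizers of the same order are conjugate; that is not a theorem, and the parameter count in Shinoda's tables is left undone. The paper's route is quite different and much more elementary: because a Sylow $5$-subgroup $U$ is isomorphic to $\mathbb{Z}_{5^k}\times\mathbb{Z}_{5^k}$, every element of order $5$ lies in $\Omega_1(U)\cong\mathbb{Z}_5\times\mathbb{Z}_5$ for some Sylow subgroup $U$, all such rank-two elementary abelian subgroups are conjugate by Sylow's theorem, and one of them is a Sylow $5$-subgroup of a subgroup $H\cong{}^2F_4(2)'$, where the Atlas shows there is a single class of elements of order $5$. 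Note also that the logical order matters: uniqueness is established \emph{first} and is then used to pin down the centralizer (to rule out $t_{12},t_{13}$), whereas your write-up tries to identify the centralizer before proving uniqueness, which is why it runs into trouble.
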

\begin{proof}
Since $q^2\equiv \pm 2 \pmod{5}$, we have
$$
q^2(q^2-1)(q^6+1)(q^8-q^4+1)\not\equiv 0\pmod{5}
$$
and $|L|_5=(q^4+1)^2_5$ by (\ref{og}). Now, $q^4+1=(q^2+\sqrt{2}q+1)(q^2-\sqrt{2}q+1)$ and
we define $\delta =\pm 1$ so that $q^2+\delta \sqrt{2}q+1 \equiv 0 \pmod{5}$. It can be seen that
\begin{align*}
  \delta = \phantom{-}1 \ \ \Leftrightarrow \ \ &  n \equiv 0,3 \pmod{4},  \\
  \delta = -1           \ \ \Leftrightarrow \ \ &  n \equiv 1,2 \pmod{4}.
\end{align*}
It follows from Lemma~\ref{torilem} that $L$ contains a maximal torus of shape 
$$
\mathbb{Z}_{q^2+\delta\sqrt{2}q+1}\times \mathbb{Z}_{q^2+\delta\sqrt{2}q+1}
$$ 
which implies that the $5$-Sylow subgroup of $G$ is isomorphic to $\mathbb{Z}_{5^k}\times \mathbb{Z}_{5^k}$, where $5^k=(q^4+1)_5$. Hence, all elements of order $5$ in a given $5$-Sylow subgroup $U$ of $L$ lie in the subgroup $\Omega_1(U)\cong \mathbb{Z}_{5}\times \mathbb{Z}_{5}$. Now the Sylow theorem implies that all subgroups isomorphic to $\mathbb{Z}_{5}\times \mathbb{Z}_{5}$ are conjugate. Since $G$ contains a subgroup $H\cong\mbox{}^2F_4(2)'$  whose $5$-Sylow subgroup is $\mathbb{Z}_{5}\times \mathbb{Z}_{5}$, it follows that every element of order $5$ of $L$ is conjugate to one of $H$. However, $H$ has a unique conjugacy class of elements of order $5$, see~\cite{85Atlas}, hence, so does $L$.

Let $t\in L$ be a fixed element of order $5$. It lies in a unique conjugacy class of such elements by Lemma~\ref{u5}. 
Since a $5$-Sylow subgroup of $G$ is abelian,
$|\!\operatorname{C}_L(t)|$ must be divisible by $|L|_5$. We see from Table~\ref{Elements}
that the only conjugacy class representatives with this property are
$t_7$, $t_9$ with centralizers of order $q^4(q^2\pm \sqrt{2}q+1)(q^2-1)(q^4+1)$ and $t_{12}$, $t_{13}$
with centralizers of order $(q^2\pm \sqrt{2}q+1)^2$. We rule out the last two possibilities. From the structure of the maximal subgroup $H=\mbox{}^2B_2(q^2)\wr 2$ of $G$, we see that if $t$ lies in
a maximal torus of $\mbox{}^2B_2(q^2)$ of order $q^2+\delta \sqrt{2}q+1$ then $\!\operatorname{C}_H(t)$ has order divisible by
$$
(q^2+\delta\sqrt{2}q+1)\cdot|\mbox{}^2B_2(q^2)|=q^4(q^2+\delta \sqrt{2}q+1)(q^2-1)(q^4+1).
$$
Since all elements of order $5$ are conjugate, $t$ cannot have
centralizers of order $(q^2\pm \sqrt{2}q+1)^2$. Thus, $t$ is conjugate to either $t_7$ or $t_9$ depending on whether
$\delta = -1$ or $\delta = 1$. 
\end{proof}

We will require an analog of Proposition~\ref{alpha3} for elements of order~$5$. First, we show that $L$ is $(5,5)$-generated, that is generated by a pair of elements of order~$5$. Note that these elements will necessarily be conjugate by Lemma~\ref{u5}. Similarly to the results in 
\cite{95Mal} (which imply Proposition~\ref{alpha3}), our proof relies on calculations in  
the \textsf{CHEVIE} package for \textsf{Maple} \cite{Chevie} which contains a generic character table of~$L$. In this section, we just formulate the necessary assertion and cite the lemma that implies it. The lemma itself is proved in Section \ref{SecComp}.

\begin{prop}
\label{alpha5}
Let $L=\mbox{}^2F_4(q^2)'$ and let ${x\in L}$~be an element of order~$5$. Then ${\alpha(x,L)=2}$.
\end{prop}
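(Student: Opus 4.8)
The plan is to prove that $L={}^2F_4(q^2)'$ is $(5,5)$-generated, i.e.\ generated by a pair of elements of order $5$; since by Lemma~\ref{u5} all elements of order $5$ in $L$ lie in a single conjugacy class, such a generating pair automatically consists of two conjugates of the given $x$, and then $\alpha(x,L)\leqslant 2$. The reverse inequality is immediate because a single element of order $5$ cannot generate the nonsolvable group $L$ (indeed any one element generates a cyclic, hence solvable, subgroup), so $\alpha(x,L)=2$. The case of the Tits group $L={}^2F_4(2)'$ should be dispatched separately by direct inspection of the \textsf{ATLAS}~\cite{85Atlas}, so from now on assume $q^2=2^{2n+1}>2$, whence $L$ is simple.

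The main tool is the structure-constant formula~(\ref{StructConstFormula}): fixing the unique class $K$ of elements of order $5$ (call a representative $t$, conjugate to $t_7$ or $t_9$ by Lemma~\ref{u5}), the number of pairs $(x_1,x_2)\in K\times K$ with $x_1x_2$ equal to a fixed $t\in K$ is
$$
(K,K,K)=\frac{|K|^2}{|L|}\sum_{\chi\in\operatorname{Irr}(L)}\frac{\chi(t)^3}{\chi(1)},
$$
which can be evaluated from the generic character table of ${}^2F_4(q^2)$ available in \textsf{CHEVIE}~\cite{Chevie}. The strategy is the standard one: show that the number of pairs $(x_1,x_2)\in K\times K$ with $x_1x_2\in K$ and $\langle x_1,x_2\rangle\neq L$ is strictly smaller than $(K,K,K)$, so that at least one such pair generates all of $L$. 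To bound the ``bad'' pairs one runs through the maximal subgroups $M$ of $L$ listed in Lemma~\ref{maximal}: for each $M$ whose order is divisible by $5$, estimate the number of pairs of elements of order $5$ in $M$ whose product again has order $5$ (crudely, by $|M|^2\cdot(\text{something})$ or by the analogous structure constants in $M$), multiply by the number of conjugates of $M$ (i.e.\ $|L{:}M|$), and sum. Since this is the kind of computation whose details are deferred to Section~\ref{SecComp}, here I would only record that the relevant reference is the lemma cited there, and note that $5\nmid q^2(q^2-1)(q^6+1)(q^8-q^4+1)$ (as $q^2\equiv\pm2\pmod 5$) so that $5$ divides $|M|$ only for $M$ among the tori normalizers $\operatorname{N}_L(T_7)$, $\operatorname{N}_L(T_9)$ (or $T_6$, depending on $\delta$), the subfield subgroups ${}^2F_4(q_0^2)$, and the reductive maximal subgroups $Sp_4(q^2){:}2$, ${}^2B_2(q^2)\wr 2$, $PGU_3(q^2){:}2$ containing a relevant torus — a short, explicitly bounded list.

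I expect the main obstacle to be the \emph{uniformity in $q$}: the summand $\chi(t)^3/\chi(1)$ must be controlled for every unipotent and non-unipotent character of ${}^2F_4(q^2)$, and the dominant term(s) must be shown to outweigh the total contribution of proper subgroups \emph{for all} $q^2=2^{2n+1}$, including the smallest relevant values where the asymptotics in $q$ are weakest. In practice one isolates the Steinberg character and the principal character (whose contributions are $\approx 1$ and $\approx |K|^2/|L|$ respectively, the latter being of order $|L|\cdot\frac{1}{5^2}\cdot(\text{small})$) as the main terms, bounds the remaining character sum by $\bigl(\max_{\chi\neq 1}|\chi(t)|\bigr)\cdot\sum_\chi |\chi(t)|^2/\chi(1)$ or by Cauchy--Schwarz together with $\sum_\chi|\chi(t)|^2=|\operatorname{C}_L(t)|$, and checks that $|\operatorname{C}_L(t)|=q^4(q^2+\delta\sqrt2 q+1)(q^2-1)(q^4+1)$ (from Lemma~\ref{u5}) is polynomially small compared with $(K,K,K)$. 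The bookkeeping — especially verifying the character-value bounds term by term against the \textsf{CHEVIE} table and confirming that the finitely many small $q$ are not exceptional — is the delicate part; as announced in the text, it is carried out in Section~\ref{SecComp}, and here the proof of Proposition~\ref{alpha5} reduces to invoking that computation.
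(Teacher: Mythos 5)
Your high-level skeleton agrees with the paper's: reduce to $(5,5)$-generation using the fact that all order-$5$ elements form a single conjugacy class (Lemma~\ref{u5}, so any generating pair of order-$5$ elements consists of conjugates of $x$), handle the Tits group separately, and certify generation for $q^2>2$ by a structure-constant computation~(\ref{StructConstFormula}) in the generic \textsf{CHEVIE} character table. But you diverge at the decisive point, and that is where your plan has a genuine gap. You propose to compute $(K,K,K)$ with $K$ the class of order-$5$ elements itself and then show that the pairs generating a proper subgroup are strictly fewer. That forces you to bound, uniformly in $q$, the contribution of every maximal subgroup of order divisible by $5$ (there are several: $P_b$, $Sp_4(q^2){:}2$, ${}^2B_2(q^2)\wr 2$, torus normalizers, subfield subgroups), together with a character-sum error analysis over all of $\operatorname{Irr}(L)$. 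None of this is carried out in your proposal; it is precisely the delicate part, and it is \emph{not} the computation that Section~\ref{SecComp} actually performs, so the deferral does not close the argument.

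The paper sidesteps all of this by choosing the \emph{target} class differently: it takes the product $z$ conjugate to $t_{16}$ or $t_{17}$, i.e.\ of order divisible by a primitive prime $r_\varepsilon\in\Pi_{12}^\varepsilon(l)$. By Lemma~\ref{maximal}, the only maximal subgroup of order divisible by $r_\varepsilon$ is $\operatorname{N}_L(T_{10})$ or $\operatorname{N}_L(T_{11})$, of order $12\,|T_\varepsilon|$, which is coprime to the orders of $t_7$ and $t_9$ (these exceed $3$ and divide $|T_3|$, resp.\ $|T_4|$). Hence \emph{any} pair of conjugates of $t_7$ or $t_9$ whose product is conjugate to $t_{16}$ or $t_{17}$ lies in no maximal subgroup whatsoever and must generate $L$; positivity of the single coefficient $(t_j^L,t_j^L,t_k^L)$, verified in Lemma~\ref{StructConst} as positivity of explicit polynomials in $q$ on $[\sqrt2,+\infty)$, then finishes the proof with no subgroup bookkeeping and no error terms. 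If you wish to complete your own route you must supply the full ``bad pair'' estimate for all $q$; switching the target class as the paper does makes that estimate unnecessary.
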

\begin{proof}
In view of Lemma~\ref{u5}, this is a particular case of Lemma~\ref{alpphat7t9}.    
\end{proof}

\begin{proof}[Proof of Theorem~{\rm \ref{thm:2F4}}]
We fix an automorphism $x$ of $L$ of prime order. First, assume that $|x|$ is odd. By Lemma~\ref{GuestTeorem}, some subgroup of the form $\langle x,x^g\rangle$, where $g\in L$, is nonsolvable. Since every nonabelian simple groups of order not divisible by $3$ is isomorphic to a Suzuki group \cite[Ch. II, Corollary 7.3]{76Gllaub}, the subgroup $\langle x,x^g\rangle$ contains an element   $y\in L$ of order~$3$ or~$5$. If $x$ is an involution then, by Lemma~\ref{involutions}, $\langle x,x^g\rangle$ also 
 contains an element $y$ of order~$3$ or~$5$ for some $g\in L$. Propositions~\ref{alpha3} and~\ref{alpha5} give  $\alpha(y,L)=2$ which yields $\alpha(x,L)\leqslant 4$. \end{proof}

\section{An application to studying the $\pi$-radical}

In this section, we give proofs of Theorem~{\rm \ref{thm:2F4beta}} and Corollary~{\rm \ref{BaerSuzuki}}. As in the proof of Theorem \ref{thm:2F4}, we will require some results that are obtained using computer calculations.

\begin{proof}[Proof of Theorem~{\rm \ref{thm:2F4beta}}]
By Theorem~\ref{thm:2F4}, we have 
$$
\beta_s(x,L)\leqslant\alpha(x,L)\leqslant 4
$$
for every divisor $s$ of $|L|$ and every automorphism $x$ of $L$ of prime order. It remains to prove that  $\beta_3(x,L)\leqslant 3$.  Three cases are possible.

\emph{Case}~$(i)$: $x$~is an involution. Then the claim follows from Lemma~\ref{involutions}.

\emph{Case}~$(ii)$: $x$~is an inner automorphism of odd order. Then $x$ is induced by conjugation with a semisimple element that is conjugate to one of $t_1$--$t_{17}$ in   Table~\ref{Elements} as follows from \cite{75Shin}. For every $t_j$ of prime order, we have $\beta_3(t_j,L)\leqslant 3$ by Table~\ref{Elements}. A justification of this fact, together with a relevant reference, is given in the last column of Table~\ref{Elements}.

\emph{Case}~$(iii)$: $x$ is not an inner automorphism. Then  $x$ is conjugate in $\langle x,L\rangle$ to a canonical field automorphism. Hence, we may assume that $x$ itself is 
a field automorphism~\cite[Proposition~4.9.1]{98GorLySol}. By Lemma~\ref{maximal}, $x$ acts in an obvious way (i.\,e. induces a field automorphism) on a maximal parabolic subgroup~$P_a$, see Lemma~\ref{maximal}, and also induces a nontrivial automorphism $\overline{x}$ on the quotient $\overline{P_a}\cong PSL_2(q^2)$ of~$P_a$ by its solvable radical. Now, Lemma~\ref{L_2} implies 
$$
\beta_3(x,L)\leqslant\beta_3(\overline{x\vphantom{P_a}},\overline{P_a})\leqslant 3.
$$
The proof is complete.
\end{proof}

\begin{proof}[Proof of Corollary~{\rm \ref{BaerSuzuki}}]
Given a group $G$, we will write $G\in{{\mathscr B}{\mathscr S}}_{\pi}^{m}$ if $G$ has the following property: a conjugacy class $D$ of $G$ is contained in~$\mathrm{O}_\pi(G)$ (equivalently, generates a $\pi$-subgroup) if and only if every $m$ elements of $D$ generate a $\pi$-subgroup. 

Suppose that the claim is false. By ~\cite[Theorem~1]{11Re}, we have ${r\geqslant 3}$. Set
\begin{multline*}
\mathscr{F}=\mathscr{S}\cup\{A_n\mid n\geqslant 5\}\cup
\{L^\varepsilon_n(q)\mid n\geqslant 2,\ \varepsilon=\pm,\  q\text{~is a prime power}
\}\cup\\
\{{}^2B_2(2^{2n+1})\mid n\geqslant 1\}\cup\{{}^2G_2(2^{3n+1})\mid n\geqslant 1\}\cup
\{{}^2F_4(2^{2n+1})'\mid n\geqslant 0\}\cup\\
\{G_2(q),{}^3D_4(q))\mid  q\text{~is a prime power}\},
\end{multline*}
where $\mathscr{S}$~is the set of $26$ sporadic groups. We consider the class $\mathscr{X}$ of all finite groups whose every nonabelian composition factor is either a group in  $\mathscr{F}$ or a $\pi$-group. Then 
${\mathscr{X}\setminus \mathscr{BS}_\pi^m\ne\varnothing}$, 
since the inclusion 
${\mathscr{X}\subseteq \mathscr{BS}_\pi^m}$ would imply the validity of the corollary we are proving.

According to \cite[Lemma~7]{11Re}, a group $G$ in $\mathscr{X}\setminus \mathscr{BS}_\pi^m$ of the least order contains a normal nonabelian simple subgroup $L$ and an element $x$ of prime order with the following properties:
\begin{itemize}
\item $\mathrm{C}_G(L)=1$, and so we may assume that $L
\leqslant G\leqslant\mathrm{Aut}(L)$;
\item $G=\langle L,x\rangle$;
\item every $m$ conjugates of $x$ generate a $\pi$-subgroup of $G$;
\item $L$ is neither a $\pi$- nor a $\pi'$-group, and so we may assume that $L\in\mathscr{F}$.
\end{itemize}
Let $s$~be the least prime divisor of $|L|$ not contained in $\pi$. Then either $r$ divides $|L|$ and $s=r$ or $r$ does not divide $|L|$ and $s>r$. According to Theorem~\ref{thm:2F4beta} and the assertions  \cite[Prorosition~1.5]{21YangReVd}, \cite[Theorem~1]{22YangWuRe}, \cite[Theorem~1]{23YangWuReVd}, and \cite[Theorem~2]{Re}, we have 
\begin{equation*}
        \beta_s(x,L)\leqslant m,
\end{equation*}
i.\,e., there are elements ${g_1,\dots,g_m\in L}$ such that $|\langle{ x^{g_1},\dots,x^{g_m}}\rangle|$ is divisible by $s$. 
But then $\langle{ x^{g_1},\dots,x^{g_m}}\rangle$~is not a $\pi$-subgroup contrary to the fact that every $m$ conjugates of $x$ generate a $\pi$-subgroup.
\end{proof}

\section{Computer calculations}\label{SecComp}

In this section, we prove the following assertion:

\begin{lem} \label{alpphat7t9}\label{alpphat12t13} Let  $x$~be one of the semisimple elements $t_7$, $t_9$, $t_{12}$, or $t_{13}$ of~$L={}^2F_4(l)'$, see Table~{\rm \ref{Elements}}. Then $\alpha(x,L)=2$. 
\end{lem}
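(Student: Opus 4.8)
The plan is to show, for each of the four classes $t_7,t_9,t_{12},t_{13}$, that there exist two conjugates whose product lies outside every maximal subgroup of $L$, so that the two conjugates generate all of $L$; equivalently, using the class multiplication coefficient formula~\eqref{StructConstFormula}, to exhibit a conjugacy class $K_3$ of $L$ such that $(K,K,K_3)\ne 0$ (where $K=x^L$) and no maximal subgroup of $L$ meets all three of $K$, $K$, $K_3$. Since $|t_j|$ is a primitive prime divisor of $l^4-1$ (for $t_7,t_9,t_{12},t_{13}$ the order divides $q^4+1 = (q^2-\sqrt2 q+1)(q^2+\sqrt 2 q+1)$, hence divides $\Phi_4(l)$ or a proper factor thereof from~\eqref{phi4_12_dec}), Lemma~\ref{maximal} drastically limits which maximal subgroups can contain an element from $K$: only the torus normalizers $\operatorname{N}_L(T_i)$ for the relevant $i$, the subgroups $Sp_4(q^2):2$, ${}^2B_2(q^2)\wr 2$, $SU_3(q^2):2$, and the subfield subgroups ${}^2F_4(q_0^2)$. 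One then chooses $x_3\in K_3$ to have order a primitive prime divisor of $l^{12}-1$ (lying in $T_{10}$ or $T_{11}$, cf.\ Lemma~\ref{toriT10T11}) — such an element lies in \emph{no} maximal subgroup other than $\operatorname{N}_L(T_{10})$, $\operatorname{N}_L(T_{11})$, or a subfield subgroup, and crucially $\operatorname{N}_L(T_{10})$, $\operatorname{N}_L(T_{11})$ contain no element from $K$ (their orders are not divisible by $|t_j|$). Thus any maximal subgroup containing $x_1x_2=x_3$ with $x_1,x_2\in K$ would have to be a subfield subgroup, and an induction on the field size (as in Lemma~\ref{toriT10T11}) eliminates that case.

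The first step is therefore bookkeeping: for each $j\in\{7,9,12,13\}$ record $|t_j|$, locate it among the primitive-prime-divisor sets $\Pi_4^\pm(l)$ via~\eqref{phi4_12_dec}, and list via Lemma~\ref{maximal} the (few) maximal subgroups whose order is divisible by $|t_j|$; simultaneously pick a target class $K_3$ of elements of order a primitive prime divisor of $l^{12}-1$ in $T_{10}$ or $T_{11}$, which by Lemma~\ref{maximal} and the argument of Lemma~\ref{toriT10T11} is contained in essentially no maximal subgroup except its own torus normalizer and subfield subgroups. The second step is the character-theoretic nonvanishing: using the generic character table of ${}^2F_4(q^2)$ available in \textsf{CHEVIE}~\cite{Chevie}, evaluate the sum in~\eqref{StructConstFormula} with $K_1=K_2=K=x^L$ and $K_3$ as chosen, and verify it is a nonzero (in fact positive, for $l$ large) polynomial in $q$; this is the computational heart of the lemma and is where the appeal to Section~\ref{SecComp}'s calculations is really needed. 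The third step assembles these: if $x_1x_2=x_3$ with $x_1,x_2\in K$ then $\langle x_1,x_2\rangle$ contains $x_3$, so $\langle x_1,x_2\rangle$ is not contained in any maximal subgroup of $L$ other than possibly a subfield subgroup ${}^2F_4(q_0^2)'$; if it is contained in such a subgroup we finish by induction on $n$ (the base case being the Tits group, handled by the \textsc{Atlas}~\cite{85Atlas}), and otherwise $\langle x_1,x_2\rangle=L$, giving $\alpha(x,L)=2$ (the value is exactly $2$, not $1$, since $L$ is not cyclic).

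The main obstacle I expect is the \textsf{CHEVIE} computation of the structure constant~\eqref{StructConstFormula}: the generic character table of ${}^2F_4(q^2)$ is large and parametrized, the classes $t_7,t_9,t_{12},t_{13}$ come in $\sim q^2$-size families indexed by a parameter, and one must show the resulting character sum is a nonzero polynomial \emph{uniformly} in that parameter and in $q$ — including checking the small values of $q^2$ (in particular $q^2=2$, where the Tits group must be treated separately from the \textsc{Atlas}, and the excluded cases $(4,2^3,-)$, $(4,2^5,-)$ from \cite{Grechkoseeva} where some $\Pi_4^-$ set is empty and the target class must be chosen on the $+$ side instead). A secondary subtlety is making sure the chosen $K_3$ genuinely avoids \emph{all} relevant maximal overgroups of the pair — one must double-check that elements of order dividing $q^2\mp\sqrt2 q+1$ do not accidentally sit inside $Sp_4(q^2):2$ or $SU_3(q^2):2$ or ${}^2B_2(q^2)\wr 2$ via some torus shared with $L$, which is exactly the content already extracted in the last column of Table~\ref{tori} and Lemma~\ref{toriT10T11}, so this part is routine given those results.
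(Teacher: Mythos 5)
Your proposal matches the paper's proof essentially step for step: the Tits group is settled separately by explicit computation, and for $l>2$ one targets the classes $t_{16}$, $t_{17}$ of order divisible by a primitive prime divisor $r_\varepsilon$ of $l^{12}-1$, uses Lemma~\ref{maximal} to see that no maximal subgroup has order divisible by both $r_\varepsilon$ and $|t_j|$, and verifies positivity of the structure constants $(t_j^L,t_j^L,t_k^L)$ with \textsf{CHEVIE}. The only divergence is your fallback induction for the subfield case, which as stated does not quite close for $\alpha$ (a pair generating ${}^2F_4(q_0^2)'$ does not generate $L$); the paper instead excludes subfield subgroups outright, asserting that $\operatorname{N}_L(T_\varepsilon)$ is the only maximal subgroup of order divisible by~$r_\varepsilon$.
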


We sketch our strategy of proof. First of all, the Tits group $\mbox{}^2F_4(2)'$ can be checked individually using \textsf{GAP}. The following stronger fact holds:
\begin{prop}
    \label{TitsGen}
Let $L=\mbox{}^2F_4(2)'$ and let $x\in L$. Then $\alpha(x,L)=2$ if $x$ has order greater than $2$  and  $\alpha(x,L)=3$ if $x$ has order $2$.  
\end{prop}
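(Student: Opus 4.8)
The plan is to prove $\alpha(x,L)=2$ for each of the semisimple classes $t_7$, $t_9$, $t_{12}$, $t_{13}$ by the standard character-theoretic technique: for a suitable conjugacy class $K$ containing $x$, show that the class multiplication coefficient $(K,K,K')$ is positive for some class $K'$ while, at the same time, no pair of conjugates of $x$ with product in $K'$ can lie in a proper subgroup. Concretely, using the generic character table of $L={}^2F_4(l)$ available in \textsf{CHEVIE} \cite{Chevie}, I would evaluate the sum in~(\ref{StructConstFormula}) over $\operatorname{Irr}(L)$ with $K_1=K_2=K$ the class of $x$ and $K_3=K'$ a carefully chosen target class, obtaining a polynomial expression in $l=q^2$ (and in the relevant surd $\sqrt{2q^2}$) for the number of pairs $(x_1,x_2)\in K\times K$ with $x_1x_2$ equal to a fixed element of $K'$. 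The first step is therefore purely computational: assemble this count as an explicit function of $l$ and verify it is positive for all admissible $l=2^{2n+1}$.

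The second, and conceptually more delicate, step is to guarantee that a positive fraction of these pairs actually generates $L$ rather than a proper subgroup. For $t_7$ and $t_9$ one uses Lemma~\ref{u5}: these are the elements of order~$5$, so their centralizers are known, and one chooses $K'$ so that any subgroup containing a pair $(x_1,x_2)$ with $x_1x_2\in K'$ is forced — by the list of maximal subgroups in Lemma~\ref{maximal} and an order/torus comparison — to be all of $L$. For $t_{12}$ and $t_{13}$, whose orders divide $q^2\pm\sqrt{2}q+1$ and which live in the tori $T_6$, $T_7$, the primitive-prime-divisor information around~(\ref{phi4_12_dec}) together with the factorizations of $\Phi_4$ and $\Phi_{12}$ severely restricts which maximal subgroups can have order divisible by $|x|$; the only dangerous overgroups are $\operatorname{N}_L(T_6)$, $\operatorname{N}_L(T_7)$, the wreath-type subgroups $\mbox{}^2B_2(q^2)\wr 2$, and subfield subgroups $\mbox{}^2F_4(q_0^2)$. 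One then argues, as in the proof of Lemma~\ref{toriT10T11}, that a conjugate $x^g$ not commuting with $x$ cannot lie in the same (or any) proper overgroup, so $\langle x,x^g\rangle=L$; the subfield case is handled by induction on $n$.

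The base of that induction, and the place where \textsf{CHEVIE} cannot directly help because $l=2$ is excluded from the generic setup, is the Tits group: this is exactly Proposition~\ref{TitsGen}, which I would prove by a direct \textsf{GAP} computation using the stored character table (or permutation/matrix representation) of $\mbox{}^2F_4(2)'$ — enumerate the $22$ conjugacy classes, and for each class of order $>2$ exhibit (via random search or via the class-multiplication-coefficient formula applied to the concrete table) two conjugates generating $L$, while for the two involution classes show no two conjugates generate but three do. I expect the main obstacle to be the generation step rather than the positivity step: getting a clean, uniform-in-$l$ argument that rules out all proper overgroups of $\langle x_1,x_2\rangle$ for the chosen target class $K'$ — in particular controlling the subfield subgroups $\mbox{}^2F_4(q_0^2)$ and, for $t_{12}$, $t_{13}$, the $\mbox{}^2B_2(q^2)\wr2$ subgroups — may require either a more refined choice of $K'$ (so that $|K'|$ or the order of $x_1x_2$ is not divisible by the order of any such subgroup) or a separate multiplication-coefficient estimate inside the subfield subgroup that is then subtracted off, in the style of a "generation is generic" counting argument.
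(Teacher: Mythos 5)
Your proposal for Proposition~\ref{TitsGen} itself---a direct \textsf{GAP} computation in ${}^2F_4(2)'$, exhibiting for each nonidentity class one or two conjugates that generate $L$ together with $x$ (hence three conjugates in total for involutions, where two conjugates can only generate a dihedral group)---is exactly the paper's proof. Most of your write-up is devoted to the generic-$l$ argument of Lemma~\ref{alpphat7t9} rather than to this proposition, but the part that is on target coincides with what the paper does.
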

\begin{proof}
    The Tits group 
$\mbox{}^2F_4(2)'$ of order 
$17971200$ can be constructed in \textsf{GAP}, for example, using the matrix generators given in 
\cite{AtlRep}. It has $22$ conjugacy classes. For a  representative $x$ of each nonidentity class, we can find in \textsf{GAP} explicitly one or two conjugates that generate the whole group together with $x$.
\end{proof}

Consequently, we may assume that $L=\mbox{}^2F_4(l)$, where $l=2^{2n+1}>2$. We have
\begin{equation}\label{og}
|L|=l^{12}(l^6+1)(l^4-1)(l^3+1)(l-1).
\end{equation}
Under our current assumptions, the set $\Pi_{12}(l)$ of primitive prime divisors of $l^{12}-1$ is nonempty. It is contained in the set of all divisors of 
$$
\Phi_{12}(l)=l^4-l^2+1=(l^2+\sqrt{2l^3}+l+\sqrt{2l}+1)(l^2-\sqrt{2l^3}+l-\sqrt{2l}+1)
$$  
and splits into the disjoint nonempty subsets $\Pi_{12}^+(l)$ and  $\Pi_{12}^-(l)$, where the elements of $\Pi_{12}^\varepsilon(l)$ divide  $l^2+\varepsilon\sqrt{2l^3}+l+\varepsilon\sqrt{2l}+1$.
\big(The fact that $\Pi_{12}^\varepsilon(l)$ is nonempty unless $l=2$ and $\varepsilon=-1$ follows from~\cite[Theorem~2]{Grechkoseeva}, and we are assuming that $l>2$.\big) For every    $\varepsilon=\pm$, we fix a prime $r_\varepsilon\in \Pi_{12}^\varepsilon(l)$. Also, let $T_\varepsilon$ denote a torus of order $l^2+\varepsilon\sqrt{2l^3}+l+\varepsilon\sqrt{2l}+1$ in~$L$, i.\,e. $T_+=T_{11}$ and $T_-=T_{10}$ in the notation of Table~\ref{tori}.

It follows from Lemma~\ref{maximal} that the only maximal subgroup of $L$ of order divisible by $r_\varepsilon$ can be the normalizer $\operatorname{N}_L(T_\varepsilon)$. Note that the numbers 
$$
l-1,\quad  l^2+1 =(l-\varepsilon \sqrt{2l}+1)(l+\varepsilon \sqrt{2l}+1), \text{~and} 
$$
$$
l^4-l^2+1=(l^2+\varepsilon\sqrt{2l^3}+l+\varepsilon\sqrt{2l}+1)(l^2-\varepsilon\sqrt{2l^3}+l-\varepsilon\sqrt{2l}+1)
$$
are pairwise coprime. Therefore, $|\operatorname{N}_L(T_\varepsilon)|=12|T_\varepsilon|$ is not not divisible by the prime divisors of $|T_i|$ greater than $3$, where $i=3,4,6,7$. The elements $t_7$, $t_9$, $t_{12}$ and $t_{13}$ are contained in these tori and have order greater than~$3$ (the elements of order $3$ are conjugate to~$t_4$).
Therefore, if we show that there is a pair $(x,y)$ of elements conjugate to $t_j$, $j=7,9,12,13$, whose product $z=xy$ has order divisible by $r_\varepsilon$, this pair will generate the whole of $L$ and we will have $\alpha(x,L)=2$.
We show this by calculating the suitable class multiplication coefficients $(x^L,x^L,z^L)$, where $z$ is chosen from  $T_\varepsilon$ so that $|z|$ is divisible by $r_\varepsilon$. 
It can be seen from Table~\ref{Elements} that the only semisimple elements with centralizers of order divisible by $r_\varepsilon$ (in particular, the proper elements of order~$r_\varepsilon$) are those conjugate to $t_{16}$ or $t_{17}$ according as $\varepsilon=-1$ or $\varepsilon=1$. 
In Lemma~\ref{StructConst} below, we show that  $(x^L,x^L,z^L)>0$. This will prove Lemma~\ref{alpphat12t13}.   

\begin{lem} \label{StructConst} Every multiplication coefficient $(t_j^L,t_j^L,t_k^L)$, where $j=7,9,12,13$ and $k=16,17$, is positive.
\end{lem}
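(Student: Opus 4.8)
The plan is to compute each coefficient $(t_j^L,t_j^L,t_k^L)$ from the class--multiplication formula~(\ref{StructConstFormula}) and to show it does not vanish; since this coefficient is, by definition, a nonnegative integer, positivity then follows. Fix representatives $g_j\in t_j^L$, $g_k\in t_k^L$ and single out the trivial character in~(\ref{StructConstFormula}):
$$
(t_j^L,t_j^L,t_k^L)=\frac{|t_j^L|^2}{|L|}\bigl(1+\Sigma\bigr),\qquad
\Sigma=\sum_{1\ne\chi\in\operatorname{Irr}(L)}\frac{\chi(g_j)^2\,\overline{\chi(g_k)}}{\chi(1)}.
$$
So it suffices to evaluate, or at least to control the sign of, $1+\Sigma$.

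The first step I would take is a structural reduction of $\Sigma$ to a finite sum. Recall that $\operatorname{Irr}(L)$ is partitioned into Lusztig series $\mathcal E(L,s)$ indexed by semisimple classes $s$ of the dual group, that $\mathcal E(L,1)$ consists of the unipotent characters, and that for $\chi\in\mathcal E(L,s)$ and $g\in L$ semisimple the condition $\chi(g)\ne0$ forces $|s|$ to divide $|\operatorname{C}_L(g)|$. Let $\chi\in\mathcal E(L,s)$ be non-unipotent, so $s\ne1$. Since $t_{16}$ and $t_{17}$ are regular semisimple with $\operatorname{C}_L(t_k)$ the cyclic maximal torus $T_{10}$, resp.\ $T_{11}$ (Table~\ref{Elements}; items~(7) and~(8) of Lemma~\ref{maximal}), whose order divides $\Phi_{12}(l)=l^4-l^2+1$, the condition $\chi(g_k)\ne0$ would force $|s|\mid\Phi_{12}(l)$. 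On the other hand, for $j\in\{7,9,12,13\}$ every prime divisor of $|\operatorname{C}_L(t_j)|$ divides $2(l-1)(l^2+1)$ (Table~\ref{Elements}, using $l^2+1=(l-\sqrt{2l}+1)(l+\sqrt{2l}+1)$ from~(\ref{phi4_12_dec})), hence is coprime to $\Phi_{12}(l)$, because $l$ is even and $l-1$, $l^2+1$, $l^4-l^2+1$ are pairwise coprime (as recorded just before Lemma~\ref{StructConst}); so $\chi(g_j)\ne0$ would force $|s|\mid|\operatorname{C}_L(t_j)|$, which together with $|s|\mid\Phi_{12}(l)$ gives $|s|=1$, contradicting $s\ne1$. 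Thus $\chi(g_j)^2\overline{\chi(g_k)}=0$ for every non-unipotent $\chi$, and
$$
(t_j^L,t_j^L,t_k^L)=\frac{|t_j^L|^2}{|L|}\Bigl(1+\sum_{\substack{\chi\ \text{unipotent}\\ \chi\ne1}}\frac{\chi(g_j)^2\,\overline{\chi(g_k)}}{\chi(1)}\Bigr),
$$
a sum over the finitely many nontrivial unipotent characters of $L$, whose number and degree polynomials do not depend on $l$. (Alternatively, one could bypass this reduction and run the computation with the full sum over $\operatorname{Irr}(L)$ in \textsf{CHEVIE}.)

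Next I would evaluate this finite sum. The unipotent characters of $\mbox{}^2F_4(l)$, their degrees, and their values on the semisimple classes $t_7,t_9,t_{12},t_{13},t_{16},t_{17}$ are explicit functions of $l$ with coefficients in $\mathbb Z[\sqrt2]$, available from Lusztig's classification and from the generic character table of $\mbox{}^2F_4(l)$ in \textsf{CHEVIE}~\cite{Chevie}; before substituting one must match the \textsf{CHEVIE} labelling of classes and characters with Shinoda's notation $t_j$, using centralizer orders and torus membership as in Section~\ref{SemisimpleProperties}. Substituting and simplifying yields $(t_j^L,t_j^L,t_k^L)$ as an explicit function of $l$, for each $j$, each $k$, and each value of the parameter labelling the family $t_j$. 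Positivity is then read off: the main term $|t_j^L|^2/|L|$ is a polynomial in $l$ of large degree with positive leading coefficient, and the finitely many correction terms are estimated against it — a small-degree unipotent $\chi$ gives a small $|\chi(g_j)|$, while a $\chi$ with large value at $g_j$, such as the Steinberg character, has correspondingly large $\chi(1)$ — so that $1+\Sigma>0$ for every $l=2^{2n+1}>2$, the smallest field $l=8$ (and, if the estimate is tight there, also $l=32$) being checked by direct substitution. The Tits group $\mbox{}^2F_4(2)'$ does not occur here, having been handled in Proposition~\ref{TitsGen}.

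The main obstacle I anticipate has two aspects. Technically, the \textsf{CHEVIE} table of $\mbox{}^2F_4(l)$ is only partially implemented, so some values of the cuspidal unipotent characters at the relevant semisimple classes may need to be supplied or cross-checked by hand — via Green functions, or from Malle's explicit character table of $\mbox{}^2F_4(l)$ — and the identification of the \textsf{CHEVIE} parametrizations with the classes $t_7,t_9,t_{12},t_{13},t_{16},t_{17}$ of Table~\ref{Elements} must be done with care. Substantively, the correction sum over the nontrivial unipotent characters is not \emph{a priori} dominated by the leading term, so the argument really relies on the exact cancellation exhibited by the computation together with a closed form clean enough to certify positivity uniformly in $n$; in particular the borderline small field $l=8$, where the smallest nontrivial unipotent degree is comparable with the size of the correction, has to be treated individually.
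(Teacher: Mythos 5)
Your overall route coincides with the paper's: both evaluate $(t_j^L,t_j^L,t_k^L)$ from formula~(\ref{StructConstFormula}) using the generic character table of ${}^2F_4(q^2)$ in \textsf{CHEVIE} and then certify positivity of the resulting explicit functions of $q$, so this is essentially the same proof with two local differences worth recording. First, to cope with the incompleteness of the \textsf{CHEVIE} table you reduce to unipotent characters via Lusztig series; this is correct (for semisimple $g$ the value $\chi(g)$ is read off the uniform projection of $\chi$, so $\chi(g)\ne0$ with $\chi$ in the series of $s$ forces $|s|$ to divide the order of a maximal torus containing $g$, and your coprimality count then kills every non-unipotent contribution), but it is the most delicate step of your write-up. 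The paper gets the same effect more cheaply: every character missing from \textsf{CHEVIE} has degree divisible by $l^4-l^2+1$, hence full $r_\varepsilon$-defect zero, and so vanishes on $t_{16}$ and $t_{17}$ by \cite[Theorem~(8.17)]{06Isa}; nothing else about the missing characters is needed. Second --- and this is the only real shortfall of your plan --- your positivity step is left as a hoped-for domination of the correction terms by the main term $|t_j^L|^2/|L|$, with small fields checked by hand; as you yourself note, such an a priori estimate is not available, especially for $l=8$. The paper's computation returns eight explicit elements $f_{j,k}(q)$ of $\mathbb{Z}[\sqrt2][q]$ of degree $28$ or $44$, and positivity for all $q\geqslant\sqrt2$ is certified by checking that every derivative $f^{(i)}_{j,k}(\sqrt2)$ is positive, so that all Taylor coefficients at $q=\sqrt2$ are positive and each $f_{j,k}$ is increasing and positive on $[\sqrt2,+\infty)$. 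Some such uniform certificate is needed to turn your outline into a proof; without it the final step remains open.
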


\begin{proof}
To calculate the required class multiplication coefficient
\begin{equation}\label{cmcs}
f_{j,k}=(t_{j}^L,t_{j}^L,t_{k}^L),    
\end{equation}
\noindent
we use the \textsf{CHEVIE} package for \textsf{Maple} \cite{Chevie} which contains a generic character table of $L$, although not all irreducible characters are included in it.
The fact that some generic irreducible characters of $\mbox{}^2F_4(l)$ are missing from
\textsf{CHEVIE} does not affect the validity of calculation.
This is because comparing the degrees of irreducible characters given in  \cite{LbkHome} we see that
all the missing characters have degrees divisible by $l^4-l^2+1$ and thus vanish on all elements of order divisible by $r_\varepsilon$ \cite[Theorem~(8.17)]{06Isa}.

The element $t_i$ for $i=7$,\,$9$,\,$12$,\,$13$,\,$16$,\,$17$ has \textsf{CHEVIE} class type number $35$,\allowbreak\,$40$,\allowbreak\,$46$,\allowbreak\,$47$,\allowbreak\,$50$,\allowbreak\,$51$, respectively. So we can calculate the class multiplication coefficient~(\ref{cmcs}), say, for $(j,k)=(7,16)$ by running the \textsf{CHEVIE} command

\medskip
\noindent\verb|> ClassMult(T,35,35,50);|
\medskip

\noindent
where \verb|T| is the generic character table of $\mbox{}^2F_4(q^2)$, and similarly for other values of $j$ and $k$. This results in the following eight polynomials in $q$:

\begin{equation}
\begin{array}{r@{\,}l@{\qquad}r@{\,}l}

  f_{7,16}(q)&=f_1(q) + \sqrt{2}f_2(q),  &  f_{7,17}(q)&=f_3(q) + \sqrt{2}f_4(q), \\[5pt]
  f_{9,16}(q)&=f_3(q) - \sqrt{2}f_4(q),  &  f_{9,17}(q)&=f_1(q) - \sqrt{2}f_2(q), \\[5pt]
  f_{12,16}(q)&=f_5(q) + \sqrt{2}f_6(q),  &  f_{12,17}(q)&=f_7(q) + \sqrt{2}f_8(q), \\[5pt]
  f_{13,16}(q)&=f_7(q) - \sqrt{2}f_8(q),  &  f_{13,17}(q)&=f_5(q) - \sqrt{2}f_6(q),
\end{array}
\end{equation}
where

\begin{multline*}
f_1 = q^{28}+5q^{26}+2q^{24}-17q^{22}+61q^{18}+11q^{16}-68q^{14}+11q^{12}+61q^{10}\\
\shoveright{-17q^{6}+2q^{4}+5q^{2}+1,}\\
\shoveleft{f_2 = 2q^{27}+4q^{25}-5q^{23}-13q^{21}+24q^{19}+36q^{17}-39q^{15}-39q^{13}+36q^{11}}\\
\shoveright{+24q^{9}-13q^{7}-5q^{5}+4q^{3}+2q,}\\
\shoveleft{f_3=q^{28}+5q^{26}+2q^{24}-9q^{22}-8q^{20}-3q^{18}+19q^{16}+28q^{14}+19q^{12}-3q^{10}}\\
\shoveright{-8q^{8}-9q^{6}+2q^{4}+5q^{2}+1,}\\
\shoveleft{f_4=2q^{27}+4q^{25}-3q^{23}-7q^{21}-4q^{19}+4q^{17}+19q^{15}+19q^{13}+4q^{11}-4q^{9}}\\
\shoveright{-7q^{7}-3q^{5}+4q^{3}+2q,}\\
\shoveleft{f_5 =  q^{44}+15q^{42}+14q^{40}-77q^{38}-63q^{36}+220q^{34} +65q^{32}-717q^{30}+ 14q^{28}}\\
\shoveright{+1807q^{26}+137q^{24}-2232q^{22} +137q^{20}+1807q^{18}+14q^{16}}\\
\shoveright{-717q^{14}+65q^{12}+220q^{10}-63q^{8}-77q^{6}+14q^{4}+15q^{2}+1,}\\
\shoveleft{ f_6 = 4q^{43}+16q^{41}-16q^{39}-76q^{37}+48q^{35}+184q^{33}-244q^{31}-472q^{29}+760q^{27}}\\
\shoveright{+1036q^{25}-1168q^{23}-1168q^{21}+1036q^{19}+760q^{17}-472q^{15}}\\
\shoveright{-244q^{13}+184q^{11}+48q^{9}-76q^{7}-16q^{5}+16q^{3}+4q,}\\
\shoveleft{ f_7 =  q^{44}+15q^{42}+14q^{40}-77q^{38}-63q^{36}+220q^{34}+193q^{32}-205q^{30}-370q^{28}}\\
\shoveright{ -241q^{26}+457q^{24} +904q^{22} +457q^{20}-241q^{18}-370q^{16}-205q^{14}}\\
\shoveright{+193q^{12}+220q^{10}-63q^{8}-77q^{6}+14q^{4}+15q^{2}+1,}\\
\shoveleft{ f_8 = 4q^{43}+16q^{41}-16q^{39}-76q^{37}+48q^{35}+200q^{33}  -4q^{31}-232q^{29}-248q^{27}}\\
\shoveright{+28q^{25} +560q^{23} +560q^{21}  +28q^{19}-248q^{17}-232q^{15}-4q^{13}}\\
+200q^{11}+48q^{9}-76q^{7}-16q^{5}+16q^{3}+4q.
\end{multline*}
We only need to show that $f_{j,k}(q)>0$ for $q\geqslant \sqrt{2}$; $j=7,9,12,13$, $k=16,17$.
It can be checked that in fact
all derivatives satisfy $f^{(i)}_{j,k}(\sqrt{2})>0$, where $i=0,\ldots,28$ for $j=7,9$, $k=16,17$
and $i=0,\ldots,44$ for $j=12,13$, $k=16,17$.
Hence, all coefficients in the Taylor expansion of $f_{j,k}(q)$ into the powers of  $q-\sqrt{2}$ are strictly positive, and so  $f_{j,k}(q)$ is strictly increasing as a function of $q$ on the interval $[\sqrt{2},+\infty)$. The claim follows from these remarks.
\end{proof}

\section{Final remarks and open problems}

The original aim of this research was to estimate the value of $\beta_s(x,L)$ in the case $L={}^2F_4(q^2)'$ and then apply it to the study of Conjecture~\ref{ConjBS}. Theorem~\ref{thm:2F4} can be viewed as a byproduct of our results.  

Comparing Theorem~\ref{thm:2F4} and its refinements for particular conjugacy classes given in Propositions ~\ref{alpha3}, ~\ref{alpha5}, and Lemma~
 \ref{alpphat7t9}, as well as the precise result for the Tits group in Proposition~\ref{TitsGen}, it is natural to ask what the exact value of $\alpha(x,L)$ is for every $x$, or at least for every $x$ of prime order. In particular, can we improve  in Theorem~\ref{thm:2F4} the estimate for $\alpha(x,L)$ from $4$ to $3$? 

The estimate $\alpha(x,L)\leqslant3$ would certainly be best possible, because a pair of involutions always generates a solvable group. Thus, $\alpha(x,L)\geqslant 3$ for an involution $x\in L$. It would be natural to look for a counterexample to the conjecture
$\alpha(x,L)\leqslant 3$ among the involutions. 

Most likely, such a counterexample does not exist, including among the involutions, as some known results speak in favour of it. Thus, T.\,Weigel   \cite[Theorem~B]{92Weig} proved that every exceptional group $L$ of Lie type can be generated by three involutions (just as every finite simple group other than $PSU_3(3)$, \cite[Theorem~A]{94MalSaxWeig}). Earlier, Ya.\,N.\,Nuzhin~\cite{90Nuzh} proved that a group of Lie type over a finite field of characteristic~$2$ can be generated by three involutions two of which commute if and only if it has type other than $A_2$, ${}^2A_2$, $A_3$, or ${}^2A_3$. For many groups,  ${}^2F_4(q^2)'$ including, an explicit triple of such involutions is given in~\cite{90Nuzh}. G.~Malle \cite[Proposition~6]{95Mal} showed that, given an involution~$u$ in $2B$, there is an element~$t$ of order~$3$ such that $L=\langle u,t \rangle$, and therefore  $L=\langle u,u^t, u^{t^2} \rangle$. This assertion combined with Proposition~\ref{TitsGen} suggests that $\alpha(x,L)=2$ if $x$ is an inner automorphism of odd prime order, $\alpha(x,L)=3$ if $x$ is an involution, and $\alpha(x,L)\leqslant 3$ if $x$ is an ``outer'' automorphism  of prime order.

\end{document}